\documentclass[11pt]{amsart}

\voffset=5mm
\oddsidemargin=5pt \evensidemargin=5pt
\headheight=4pt     \topmargin=-20pt
\textheight=655pt   \textwidth=463.pt

\usepackage[dvipsnames]{xcolor}
\usepackage{amssymb,verbatim}
\usepackage{amsmath,amsfonts,enumitem,bm}
\usepackage[mathscr]{euscript} 
\usepackage{amsthm}
\usepackage{url}
\usepackage{graphicx} 
\usepackage{enumitem} 

\usepackage{hyperref} 

\hypersetup{colorlinks} 
\usepackage{bbm} 
\usepackage{bm} 
\usepackage{mathrsfs} 
\usepackage{cancel} 
\usepackage{ytableau} 
\usepackage{mathtools}
\usepackage{faktor} 



\usepackage[colorinlistoftodos]{todonotes}

\RequirePackage{cleveref}
\usepackage{hypcap}
\hypersetup{colorlinks=true, citecolor=darkblue, linkcolor=darkblue}
\definecolor{darkblue}{rgb}{0.0,0,0.7}
\newcommand{\darkblue}{\color{darkblue}}

\definecolor{darkred}{rgb}{0.68,0,0}

\definecolor{darkgreen}{rgb}{0,.38,0}

\newcommand{\defn}[1]{\emph{\darkblue #1}}




\setlist[enumerate]{
	label=\textnormal{({\roman*})},
	ref={\roman*}}

\makeatletter
\def\th@plain{%
	\thm@notefont{}
	\itshape 
}
\def\th@definition{%
	\thm@notefont{}
	\normalfont 
}
\makeatother


\newtheorem{thm}{Theorem}[section]
\newtheorem{lemma}[thm]{Lemma}

\newtheorem*{claim*}{Claim}
\newtheorem{cor}[thm]{Corollary}

\newtheorem{conj}[thm]{Conjecture}

\newtheorem{question}[thm]{Question}

\theoremstyle{definition}

\newtheorem{rem}[thm]{Remark}

\numberwithin{figure}{section}
\numberwithin{equation}{section}


\def\bu{\bullet}

\def\zz{\mathbb Z}
\def\nn{\mathbb N}
\def\cc{\mathbb C}

\def\rr{\mathbb R}
\def\qqq{\mathbb Q}

\def\ov{\oa}

\def\Ga{\Gamma}

\def\ga{\gamma}
\def\si{\sigma}

\def\al{\alpha}
\def\be{\beta}

\def\cS{{\textbf{\textit{S}}}}

\def\Fl{{\mathcal F}}

\def\ssu{\subset}

\def\<{\langle}
\def\>{\rangle}

\def\rR{ {\textsc {\rm R} } }

\def\oa{\overrightarrow}

\def\rK{{\mathbf{K}}}
\def\rL{{\mathbf{L}}}

\def\0{{\mathbf 0}}

\def\.{\hskip.06cm}
\def\ts{\hskip.03cm}

\def\pt{\partial}

\def\bz{{\textbf{\textit{z}}}}

\def\bx{{\textbf{\textit{x}}}}

\def\by{{\textbf{\textit{y}}}}

\def\bbe{\textbf{\textit{e}}}

\def\bal{{\boldsymbol{\alpha}}}
\def\bbe{{\boldsymbol{\be}}}
\def\bga{{\boldsymbol{\gamma}}}

\newcommand{\weight}{\mathrm{weight}}


\def\.{\hskip.06cm}
\def\ts{\hskip.03cm}

\def\nin{\noindent}

\newcommand{\textsu}[1]{\textup{\textsf{#1}}}
\usepackage{indentfirst}
\DeclareTextSymbolDefault{\ae}{T1}

\newcommand{\ComCla}[1]{\textup{\textsu{#1}}}

\newcommand{\sharpP}{\ComCla{\#P}}
\newcommand{\SP}{\ComCla{\#P}}

\newcommand{\GapP}{\ComCla{GapP}}

\newcommand{\GapL}{\ComCla{GapL}}

\newcommand{\Sigmap}{\ensuremath{\Sigma^{{\textup{p}}}}}
\newcommand{\Pip}{\ensuremath{\Pi^{{\textup{p}}}}}

\newcommand{\NP}{\ComCla{NP}}
\newcommand{\VP}{\ComCla{VP}}
\newcommand{\VNP}{\ComCla{VNP}}
\newcommand{\BPP}{\ComCla{BPP}}

\newcommand{\coNP}{\ComCla{coNP}}

\renewcommand{\P}{\ComCla{P}}
\newcommand{\CeqP}{\ComCla{C$_=$P}}

\newcommand{\PH}{\ComCla{PH}}

\newcommand{\PSPACE}{\ComCla{PSPACE}}

\newcommand{\EXP}{\ComCla{EXP}}

\newcommand{\AM}{\ComCla{AM}}
\newcommand{\coAM}{\ComCla{coAM}}

\def\SP{\sharpP}

\newcommand{\SZK}{\ComCla{SZK}}
\newcommand{\PZK}{\ComCla{PZK}}
\newcommand{\NE}{\ComCla{NE}}
\newcommand{\coNE}{\ComCla{coNE}}

\def\GRH{\textup{\sc GRH}}

\def\HN{\textup{\sc HN}}
\def\HNP{\textup{\sc HNP}}

\def\rpoly{\textup{\rm poly}}

\def\SV{\textup{\sc SchubertVanishing}}

\def\poly{{\P}}
\def\CEP{{\CeqP}}

\newcommand{\inv}{\operatorname{{\rm inv}}}
\newcommand{\Des}{\operatorname{{\rm Des}}}
\newcommand{\des}{\operatorname{{\rm des}}}



\newcommand{\Sch}{\mathfrak{S}} 





 %
 %
\DeclareMathOperator{\zero}{\mathbf{0}} 

\newcommand{\RC}{{\text {\rm RC} } }

\newcommand{\Sc}{\mathfrak{S}}





\begin{document}

\title[Vanishing of Schubert coefficients]{Vanishing of Schubert coefficients \\ is in $\AM\cap \coAM$ assuming the $\GRH$}

\author[Igor Pak \. \and \. Colleen Robichaux]{Igor Pak$^\star$  \. \and \.  Colleen Robichaux$^\star$}

\makeatletter

\thanks{\thinspace ${\hspace{-.45ex}}^\star$Department of Mathematics,
UCLA, Los Angeles, CA 90095, USA. Email:  \texttt{\{pak,robichaux\}@math.ucla.edu}}

\thanks{\today}

\begin{abstract}
The \emph{Schubert vanishing problem} is a central decision problem in
algebraic combinatorics and Schubert calculus, with applications to
representation theory and enumerative algebraic geometry.  The problem
has been studied for over 50 years in different settings, with
much progress given in the last two decades.

We prove that the Schubert vanishing problem is in~$\AM$ assuming the
\emph{Generalized Riemann Hypothesis} ($\GRH)$.
This complements our earlier result in \cite{PR24o},
that the problem is in~$\coAM$ assuming the $\GRH$.
In particular, this implies that the Schubert vanishing problem is
unlikely to be $\coNP$-hard, as we previously conjectured
in \cite{PR24o}.

The proof is of independent interest as we formalize and expand the
notion of a \emph{lifted formulation} partly inspired by
algebraic computations of Schubert problems,
and \emph{extended formulations} of linear programs.  We use
the result by Mahajan--Vinay \cite{MV97} to show that the determinant
has a lifted formulation of polynomial size.  We combine this
with Purbhoo's algebraic criterion \cite{Purbhoo06} to derive
the result.
\end{abstract}


\maketitle

\section*{Foreword}

Despite appearances, the results of the paper do not require much
of the background to state, see below.  There is, however,
a great deal of combinatorial and algebraic background needed to
understand and appreciate the motivations.  This occupies the rest of
Section~\ref{s:intro}.

The heart of the proof is a combinatorial result that we call
Determinant Lemma~\ref{l:det},
which states that as a polynomial in commuting variables, the determinant
has a lifted formulation of a polynomial size.  Here the lifted formulations
are an algebraic analogue of extended formulations for convex polyhedra,
while the result is a statement in algebraic complexity theory.
We present both the background and a short proof of the Determinant
Lemma in Section~\ref{s:lifted}.

Then, in Section~\ref{s:proof} we present the proof of the main result.
In Appendix~\ref{App:Schub}, we present four different definitions of
Schubert polynomials, elucidating their different properties.
Finally, in Appendix~\ref{s:finrem} we include various quotes
on the history of the problem and its significance.

\smallskip

\section{Schubert vanishing problem}\label{s:intro}

\subsection{Main result} \label{ss:intro-main}
\defn{Schubert polynomials} \ts $\Sch_w\in \nn[x_1,x_2,\ldots]$ \ts indexed by
permutations $w\in S_n\ts$, are celebrated generalizations of Schur polynomials. Schubert polynomials satisfy partial symmetries, but are not symmetric in general.
They were introduced by Lascoux and Sch\"{u}tzenberger~\cite{LS82,LS85}
to represent cohomology classes of Schubert varieties in the complete
flag variety, building on the earlier works by Demazure \cite{Dem74} and
Bernstein--Gelfand--Gelfand \cite{BGG73}.  We refer to \cite{Las95} for
a historical introduction.

Schubert polynomials have been extensively studied from algebraic,
combinatorial, representation theoretic, and computational points of view.
Fundamentally, they represent a combinatorial approach to problems in enumerative
algebraic geometry, answering questions of the form: \emph{How many lines in the space
intersect four given lines in general position?}\footnote{The answer is $2$ in
this case, see e.g.~\cite{KL72}.  Making rigorous sense of the natural generalization of this problem
was the goal of \emph{Hilbert's fifteenth problem} (1900).  Resolving it required
a major effort, resulted in several (equivalent) formal definitions, see e.g.\ \cite{Kle76}. }
See~\cite{Mac91,Man01} for
classic introductory surveys, \cite{Knu16,Knu22} for overviews of recent results,
\cite{AF,KM05} for geometric aspects, and
\cite[$\S$10]{Pak-OPAC} for an overview of computational complexity aspects.

\defn{Schubert coefficients} \ts are defined as structure constants:
$$
\Sch_u \cdot \Sch_v \, = \, \sum_{w \ts \in \ts S_\infty} \. c^w_{u,v} \. \Sch_w\ts,
$$
for $u,v\in S_n$.
It is known that \ts $c^w_{u,v} \in \nn$ \ts for all \ts $u,v,w\in S_\infty$\ts,
as they have a geometric meaning which generalize
the number of intersection points of lines, see e.g.\ \cite{AF,Ful97}.
Schubert polynomials and Schubert coefficients also emerge in
representation theory \cite{BS00,RS95}, category theory \cite{KP04},
matroid theory \cite{AB07}, and the
pole placement problem in linear systems theory \cite{Byr89,EG02}.

Since Schubert polynomials are {Schur
polynomials} for \emph{Grassmannian permutations} (permutations with one descent),
Schubert coefficients generalize the \emph{Littlewood--Richardson coefficients}.
The \defn{Schubert vanishing problem} \ts is a decision problem
$$
\SV \, := \, \big\{c^w_{u,v} =^? 0 \big\}.
$$
This is an extremely well studied problem, both for its own sake, and as a
stepping stone towards understanding the nature of Schubert coefficients.
The main result of this paper is the following:

\smallskip

\begin{thm}[{\rm Main theorem}{}]\label{t:main}
$\SV \. \in \. \AM \cap \coAM$ \. assuming the \ts $\GRH$.
\end{thm}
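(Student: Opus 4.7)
The plan is to reduce the Schubert vanishing problem to polynomial identity testing (PIT) for an explicit polynomial $\Phi_{u,v,w}$, exhibit a polynomial-size lifted formulation for $\Phi_{u,v,w}$, and then apply a Schwartz--Zippel-style randomized evaluation, which yields an $\AM$ protocol for $\SV$ under the $\GRH$. The $\GRH$ enters, as in \cite{PR24o}, to guarantee the existence and polynomial-time identification of primes of polynomial bit-length in an appropriate range, so that evaluation over $\mathbb{F}_p$ runs in polynomial time. Combined with $\SV \in \coAM$ from \cite{PR24o}, this gives $\AM \cap \coAM$.

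The first step invokes Purbhoo's algebraic criterion \cite{Purbhoo06}, which realizes the non-vanishing of $c^w_{u,v}$ as the non-vanishing of an explicit polynomial $\Phi_{u,v,w}$ in polynomially many auxiliary variables, built from determinants of matrices whose entries are linear forms in those variables. This translates $\SV$ into the polynomial identity question $\Phi_{u,v,w} \equiv 0$.

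The second step exhibits a polynomial-size lifted formulation for $\Phi_{u,v,w}$. The Determinant Lemma~\ref{l:det}, which repackages the Mahajan--Vinay branching-program / clow-sequence construction \cite{MV97}, provides a lifted formulation of size polynomial in $n$ for the $n \times n$ determinant. Substituting this into each determinant appearing in Purbhoo's expression produces a lifted formulation of $\Phi_{u,v,w}$ of polynomial total size, with formal degree polynomial in the input length.

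The final step is the randomized protocol. Arthur picks a random point in $\mathbb{F}_p^N$, where $p$ is a prime of polynomial bit-length exceeding the Schwartz--Zippel threshold for the formal degree of $\Phi_{u,v,w}$, and evaluates the lifted formulation modulo $p$; if $\Phi_{u,v,w} \equiv 0$ the evaluation returns $0$, and otherwise it is nonzero with high probability. Under the $\GRH$ such a prime exists and can be found efficiently as in \cite{PR24o}, so this runs in polynomial time and puts $\SV$ in $\AM$, completing the argument. The main obstacle I anticipate is the composition step: verifying that the substitution of Determinant Lemma formulations into Purbhoo's expression genuinely stays polynomial in size and degree, i.e., that the lifted-formulation abstraction composes cleanly through nested determinants without blowup in the auxiliary-variable count or formal degree.
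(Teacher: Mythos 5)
Your two building blocks --- Purbhoo's criterion (Lemma~\ref{lem:Pur}) and the Determinant Lemma~\ref{l:det} via \cite{MV97} --- are exactly the paper's, but your final step is not the paper's argument, and it is where the gap lies. The paper never runs a Schwartz--Zippel evaluation. It keeps the generic entries $\bal,\bbe,\bga$ of $\rho,\omega,\tau\in\mathsf{N}$ as \emph{formal parameters}, adjoins auxiliary variables $\widetilde\rho,\widetilde\omega,\widetilde\tau$ constrained by $\rho\widetilde\rho=\mathrm{Id}$, etc.\ (so no symbolic inverses blow up the size), replaces the single equation $\det(M)=0$ by its compact lifted formulation, and thereby reduces $\SV$ to $\HNP$, i.e.\ to solvability over $\overline{\cc(\bal,\bbe,\bga)}$ (Main Lemma~\ref{l:SV-HNP}). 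The $\AM$ protocol and the $\GRH$ then come entirely from the black box of \cite{A+24} (Theorem~\ref{t:main-HNP}): that is where primes, reductions modulo $p$, and the $\GRH$ genuinely enter. Your stated use of the $\GRH$ --- to find a prime of polynomial bit-length --- is not where it is needed at all: such primes exist and are found unconditionally, and your ``protocol'' involves no Merlin, so if it were sound as written it would place $\SV$ in $\BPP$ unconditionally, a far stronger claim than the theorem, one the authors conspicuously do not make and which would render both this paper and \cite{PR24o} moot. Proving too much is the symptom of the missing step.

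The missing step is the treatment of genericity. Lemma~\ref{lem:Pur} is a statement about \emph{generic} $\rho,\omega,\tau$, and no effective (polynomial) bound on the non-generic locus is available; your soundness analysis silently assumes that Purbhoo's criterion may be applied at the randomly sampled small point, i.e.\ that this point is generic, and that is precisely the step you cannot justify by a degree count you have in hand. The paper circumvents evaluation altogether: genericity is absorbed by working over the function field $\cc(\bal,\bbe,\bga)$, which turns the question into a parametric Nullstellensatz instance rather than a plain PIT instance you can evaluate, and the heavy lifting (Koiran-style, hence the $\GRH$) is delegated to Theorem~\ref{t:main-HNP}, which your proposal never invokes. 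Two smaller points: the obstacle you anticipate --- composing lifted formulations through ``nested determinants'' --- does not arise, since Purbhoo's criterion yields a single $\binom{n}{2}\times\binom{n}{2}$ matrix $M$ whose entries are monomials of degree at most two, and Lemma~\ref{l:det} is applied exactly once; and the $\coAM$ half is, in both your plan and the paper, simply quoted from \cite{PR24o}, so nothing further is needed there.
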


\smallskip

Here the \ts $\GRH$ \ts stands for the \emph{Generalized Riemann Hypothesis}, that
all nontrivial zeros of $L$-functions \ts $L(s,\chi_k)$ \ts have real part~$\frac12$.
The main theorem comes as a surprising improvement over
the main result in \cite{PR24o}, which states that \ts $\SV \. \in \. \coAM$ \ts
assuming the \ts $\GRH$.
See an extensive discussion of the prior work later in this section.

\smallskip

\subsection{Schubert vanishing} \label{ss:intro-SV}
It turns out, the Schubert vanishing problem can be stated in an
elementary language in the style of classical geometry, without
the use of Schubert polynomials.  Since we will not need to use them,
several formal definitions of Schubert polynomials are included in
Appendix~\ref{App:Schub} in case the reader is curious.

Let $Z=\cc^n$ be a fixed vector space with a basis $\{e_1,\ldots,e_n\}$.
A \emph{complete flag} $F_\bu$ is a sequence of subspaces \ts
$\{\zero\} =F_0\subset F_1 \subset F_2 \subset\ldots \subset F_n = Z$, where $\dim(F_i)=i$
for all \ts $0\le i \le n$.  Let $\Fl_n$ denote the set of complete flags in~$Z$.
A \emph{coordinate flag} $E_\bu \in \Fl_n$ is a complete flag \ts
$\{\zero\}=E_0\subset E_1 \subset E_2 \subset\ldots \subset E_n = Z$, where
$E_i = \cc\<e_1,\ldots,e_i\>$.  A \emph{permutation flag} $E_\bu^w \in \Fl_n$
is a coordinate flag corresponding to the basis $\{e_{w(1)},\ldots,e_{w(n)}\}$.

\bigskip

$\neg\SV$ \ts ($=$ {\sc SchubertNonVanishing})

\smallskip

{\bf Input:} \. $n\times n$ integral matrices $(a_{ij})$,  $(b_{ij})$, $(c_{ij})$

\smallskip

{\bf Decide:} \. $\forall \ts U_\bu, V_\bu \in \Fl_n$ \. s.t.\ \. $U_i \cap V_{n-i} = \{\zero\}$ \. for all \. $1\le i < n$,
\smallskip

\hskip 1.6cm
$\exists \ts W_\bu\in \Fl_n$ \. s.t.\ \. $\dim(W_i \cap E_j)=a_{ij}\ts$, \, $\dim(W_i \cap U_j)\ge b_{ij}$

\smallskip

\hskip 1.6cm  and \. $\dim(W_i \cap V_j)\ge c_{ij}\ts$, \. for all \. $1\le i,j \le n$

\bigskip

\nin
The naming choices for these complete flags is not accidental.  It is easy to see that
for every complete flag $W_\bu$, the \emph{dimension matrix} \ts
$\dim(W_i \cap E_j)=a_{ij}$ \ts coincides with a dimension matrix for
some permutation flag $E_\bu^w$ for some $w\in S_n\ts$, so we have \.
$a_{ij} =  \big| \{w(1),\ldots,w(j)\}  \cap \{1,\ldots,i\}\big|$.
Similarly, taking \. $b_{ij} = \big| \{u(1),\ldots,u(j)\}  \cap \{1,\ldots,i\}\big|$ \.
and  \. $c_{ij} = \big| \{v(1),\ldots,v(j)\}  \cap \{1,\ldots,i\}\big|$ \.
provides a translation between two equivalent formulations of the Schubert vanishing problem.

\smallskip

\subsection{Prior work: general results} \label{ss:intro-prior}
Much of the work on the problem has been a healthy collaboration and occasional
competition of combinatorial and algebraic tools.  Below we give a somewhat
ahistorical overview, leaving the special cases until the end.

As stated in~$\S$\ref{ss:intro-SV} above, the Schubert vanishing problem
is not a priori decidable since there are
uncountably many pairs of complete flags $U_\bu, V_\bu$ to be checked.
In fact, over $\rr$ the existence of $W_\bu$ can depend on $U_\bu,V_\bu$
even if these two complete flags are generic; over large finite fields
a major result by Vakil that this does not happen \cite{Vak06}.

Over $\cc$, the problem simplifies significantly.  We can always assume
that $U_\bu$ is a permutation flag, while $V_\bu$ is a \emph{generic flag},
but making the notion of ``generic'' quantitative is highly nontrivial
and not well-understood in explicit terms.  Heuristically, this phenomenon is
a variation on the \emph{polynomial identity testing} ({\sc PIT}),
where the problem is in $\BPP$ over large finite fields,
while over $\rr$ the problem is believed to be not in $\PH$.\footnote{Over $\rr$,
{\sc PIT} is equivalent to the \emph{existential theory of the reals} \ts
{\sc $(\exists\rr)$}, see e.g.\ \cite{Sch10}.}

By extending combinatorial tools of Lascoux and Sch\"utzenberger, it was shown
in \cite{BB93,BJS93,FS94} that the \emph{Schubert--Kostka numbers}
\ts $K_{w,\al} := [x_1^{\al_1}x_2^{\al_2}\cdots] \ts \Sch_w$ \ts
are nonnegative integers, and moreover that they are in $\SP$ as a counting function.
This immediately shows that $\SV\in \PSPACE$.  The \emph{effective M\"obius inversion}
argument in \cite[Thm~1.4]{PR24} easily implies that computing Schubert coefficients
is in \ts $\GapP=\SP-\SP$, and that \ts $\SV\in \CEP$.  This was
also observed earlier by Morales as a consequence of the \emph{Postnikov--Stanley
formula} \cite[$\S$17]{PS09}, see \cite[Prop.~10.2]{Pak-OPAC} for the explanation.
Until recently, it was believed that \ts $\SV\notin \PH$, and potentially
even $\CEP$-complete, see a discussion in \cite[$\S$2.2]{PR24}.

In a different direction, a direct description of an algebraic system was
given by Billey and Vakil
in \cite[Thm~5.4]{BV08}, which has exactly \ts $c^w_{u,v}$ \ts
solutions for generic values of certain variables.  They also describe
the system of conditions for these variables being generic under the assumption
that the set of solutions is $0$-dimensional \cite[Cor.~5.5]{BV08}.
The authors do not give a complexity analysis for this system;
see \cite[$\S$8.1]{PR24o} for further details and a complexity discussion.

In~\cite{HS17}, Hein and Sottile introduced an
algebraic system  similar in flavor that they called a \emph{lifted square formulation},
giving a practical algorithm for computing Schubert coefficients
\ts $c^w_{u,v}$.  Their system had additional variables compared to the
Billey--Vakil system, and allowed polynomial equations to have smaller
(polynomial) size.  This property was critical in the analysis given
in~\cite{PR24} (see below).  We note that systems in other
\emph{numerical Schubert calculus} papers \cite{HHS16,L+21}
do not have polynomial size and are based on different principles.

Another approach was given by Purbhoo \cite{Purbhoo06}
(see also \cite[$\S$2]{Belkale06}).  He introduced a fundamentally
different algebraic system which gives necessary and sufficient condition for
vanishing of Schubert coefficients.  Just like the Billey--Vakil and
Hein--Sottile system, some variables were required to be generic to ensure
that the set of solutions is $0$-dimensional, a difficult condition to
analyze computationally.  An important feature of this approach is
the dual nature of the system, as it gives an algebraic certificate for
vanishing rather than non-vanishing.

In \cite[Thm~1.4]{PR24o}, we proved that \ts $\SV \in \coAM$ \ts assuming the \ts
$\GRH$.  We modified the Hein--Sottile system to prove the inclusion
\ts $\neg \SV \in \HNP$, the
parametric version of the {\em Hilbert Nullstellensatz}.
Then we used a  recent result in
\cite{A+24}, that \ts $\HNP\in \AM$ \ts assuming the \ts $\GRH$,
which is an extension of Koiran's celebrated result for the
{Hilbert Nullstellensatz} \cite{Koiran96}.

Our Main Theorem~\ref{t:main} that \ts $\SV \in \AM$, is a complementary result
proved by using a superficially similar inclusion \ts $\SV \in \HNP$.
The proof is based on Purbhoo's algebraic system.  Curiously, 
we also used Purbhoo's algebraic system to show that \ts
$\neg\SV \in \NP_\cc \cap \poly_\rr\ts$, a complexity result
in the \emph{Blum--Shub--Smale model of computation} over general fields
 \cite[Appendix~B]{PR24o}.

\smallskip

\subsection{Prior work: special cases} \label{ss:intro-special}
There is a large number of sufficient conditions for vanishing of
Schubert coefficients scattered across the literature.  These were
made both in an attempt to better understand the problem from a
combinatorial point of view, and as a partial result towards its
eventual resolution. Immediate from the combinatorial and geometric interpretations of the Schubert coefficients, we have the \emph{dimension condition} that if $\inv(u)+\inv(v)\neq \inv(w)$ then $c_{u,v}^w=0$, where $\inv$ denotes the number of inversions of the permutation.

Further, we have the following conditions,
which can be verified in polynomial time:

\smallskip

$\circ$ \ts  the \emph{number of descents condition} of Lascoux and Sch\"utzenberger \cite{LS82},

$\circ$ \ts \emph{strong Bruhat order condition},
see e.g.\ \cite[$\S$5.1]{StDY22} combined with \cite[Prop.~2.1.11]{Man01},


$\circ$ \ts Knutson's \emph{descent cycling condition} \cite{Knutson01} (see also
\cite[Cor.~4.15]{PW24}),

%
$\circ$ \ts \emph{permutation array condition} by  Billey and Vakil \cite[Thm~5.1]{BV08} (see also \cite[Prop.~9.7]{AB07}),

$\circ$ \ts St.~Dizier and Yong's condition on certain filling of Rothe diagrams \cite[Thm~A]{StDY22}, and


$\circ$ \ts Hardt and Wallach's condition on empty rows in Rothe diagrams \cite[Cor.~5.12]{HW24}.

\smallskip
\nin
For Grassmannian permutations, Schubert coefficients are the \emph{Littlewood--Richardson}
(LR) \emph{coefficients}, see e.g.\ \cite{Mac91,Man01}.
In this special case the vanishing problem is in~$\poly$ as a corollary
of the Knutson--Tao \emph{saturation theorem} \cite{DeLM06,MNS12}.
This is one of several important special cases where Schubert coefficients  \ts $c^w_{u,v}$ \ts
have a known combinatorial interpretation.  In such cases, the combinatorial interpretation
can be interpreted as $\NP$ (sufficient) conditions for non-vanishing.  Notable examples include:

\smallskip

$\circ$ \ts Purbhoo's \emph{root game conditions} \cite{PurbhooThesis,Purbhoo06}, and

$\circ$ \ts Knutson and Zinn-Justin's several \emph{tiling conditions} \cite{KZ17,KZ23}.

\smallskip

\nin
We refer to \cite[$\S$5]{StDY22} and \cite[$\S$1.6]{PR24o} for technical details,
comparisons, and further background on all these conditions.

\smallskip

\subsection{Implications} \label{ss:intro-imply}
Let us emphasize several implications of the main result.

\subsubsection{New type of problem} \label{sss:intro-imply-example}
From the computational complexity point of view, having a new natural problem
in \ts $\AM\cap \coAM$ \ts is quite curious since this problem is apparently
different from known problems in this class.
Indeed, other problems in \ts $\AM\cap \coAM$ \ts
include various ``equivalence problems'':
\emph{graph isomorphism} \cite{GMW91} (see also \cite{BHZ87}),
\emph{code equivalence} \cite{PR97} (see also \cite{BW24}),
\emph{ring isomorphism} \cite{KS06},
\emph{permutation group isomorphism} \cite{BCGQ11}, and
\emph{tensor isomorphism} \cite{GQ23}.

In fact, the problems listed above are in lower complexity classes.
For example, famously, {\sc Graph Isomorphism} is in \ts
$\NP\cap\coAM$, see e.g.\ \cite{KST93}, and is in the perfect/statistical zero knowledge
classes \ts $\PZK \subseteq \SZK \subseteq \AM\cap \coAM$ \cite{Vad99}.
We refer to \cite{BBM11} for a rare example of a
problem that is in $\AM\cap \coAM$, but not necessarily in~$\SZK$.

\subsubsection{Positive rule via derandomization} \label{sss:intro-imply-derandom}
A major open problem in algebraic combinatorics is whether Schubert coefficients
have a \emph{combinatorial interpretation} \cite[Problem~11]{Sta00}, see also
$\S$\ref{ss:finrem-hist} and~$\S$\ref{ss:finrem-nonSP}.  In the language of
computational complexity this is asking whether this counting problem is in
$\SP$, see a detailed discussion in \cite[$\S$10]{Pak-OPAC} (cf.\ also~\cite{Ass23}).
This would imply that $\SV$ is in $\coNP$.  In the combinatorial language,
this is saying that positivity of Schubert coefficients problem \ts
$\big\{c^w_{u,v}>^?0\big\}$ \ts has a \emph{positive rule} \cite{PR24b}.

The special cases mentioned in $\S$\ref{ss:intro-special}
suggest that both Schubert vanishing and Schubert non-vanishing might
have a positive rule, i.e., that \ts $\SV \in \NP\cap\coNP$.  Until recently this
conclusion would seem fantastical and out of reach. Now, it was pointed out
in \cite{PR24o,PR24b}, that a derandomization result by
Miltersen--Vinodchandran \cite{MV05} (extending \cite{KvM02}),
implies that \ts $\AM = \NP$ \ts assuming some languages in
$\NE \cap \coNE$ require nondeterministic exponential size circuits.

For our purposes, a weaker derandomization assumption would also suffice:
it was shown by Gutfreund, Shaltiel and Ta-Shma \cite{GST03}, that if $\EXP$ requires
exponential time even for $\AM$ protocols (we call this {\sc GST} assumption),
then \ts $\AM\cap \coAM =\NP\cap\coNP$.
In a combinatorial language, this {\sc GST} assumption combined with the $\GRH$ imply
that there exists a positive rule for both vanishing and positivity of Schubert
coefficients.  While the latter is quite natural from a combinatorial interpretation
of Schubert coefficients point of view, the former is quite surprising, see below.

\subsubsection{Computational hardness of Schubert vanishing} \label{sss:intro-imply-hard}
It has been known for a while that the vanishing of Schubert coefficients is
computationally hard, see e.g.\ an extensive discussion in \cite[$\S$5.2]{BV08}
and~$\S$\ref{ss:finrem-complexity}.  Here are two versions of the problem available
in the literature.

\smallskip

\begin{question}
[{\rm Adve, Robichaux and Yong \cite[Question~4.3]{ARY19}}{}]  \label{conj:SV-NP}
Is \ts $\SV$ \ts $\NP$-hard?
\end{question}


\begin{conj}[{\rm Pak and Robichaux \cite[Conj.~1.6]{PR24o}}{}]  \label{conj:SV-CoNP}
$\SV$ \ts is \ts $\coNP$-hard.
\end{conj}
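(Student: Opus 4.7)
The plan is to establish $\coNP$-hardness of $\SV$ by a polynomial-time many-one reduction from \textsc{Unsat}, the complement of \textsc{3-Sat}. Given a 3-CNF formula $\varphi$ on $n$ variables and $m$ clauses, the goal is to construct in polynomial time permutations $u_\varphi, v_\varphi, w_\varphi \in S_N$ with $N = \textnormal{poly}(n,m)$ such that $\varphi$ is unsatisfiable if and only if $c^{w_\varphi}_{u_\varphi, v_\varphi} = 0$. Equivalently, any satisfying assignment of $\varphi$ must produce a combinatorial witness (for instance, a pipe dream or an RC-graph) for $\Sch_{u_\varphi}\cdot \Sch_{v_\varphi} \to \Sch_{w_\varphi}$, and conversely every such witness must decode into a satisfying assignment of $\varphi$.

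The strategy is to design combinatorial gadgets mirroring Boolean gates. The natural arena is the pipe dream model, where $c^w_{u,v}$ counts certain planar configurations of crossings governed by Rothe diagrams. I would look for gadget permutations whose pipe dreams encode, respectively, single variables, negations, conjunctions, and disjunctions, and which compose under the direct-sum operation $(u,v)\mapsto u\oplus v$ that behaves multiplicatively on Schubert structure constants. An attractive alternative is to work directly in the geometric formulation $\neg\SV$ from $\S$\ref{ss:intro-SV}, encoding each clause as a family of intersection-dimension conditions $\dim(W_i\cap E_j)=a_{ij}$, so that a consistent flag $W_\bu$ exists if and only if all clauses are simultaneously satisfiable. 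Purbhoo's criterion and the Hein--Sottile lifted formulation give algebraic handles for verifying that the target coefficient is nonzero on precisely the desired witnesses, and the sufficient conditions from $\S$\ref{ss:intro-special} (Knutson's descent cycling, St.\ Dizier--Yong, Hardt--Wallach) would serve as sanity checks restricting which gadget permutations can even have a chance of success.

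The main obstacle is the tension with Theorem~\ref{t:main}: a proof of $\coNP$-hardness combined with that main theorem forces $\coNP \subseteq \AM$ under the $\GRH$, collapsing the polynomial hierarchy to $\Sigmap_2$. Any reduction must therefore either be robust enough to survive such a collapse, or else be paired with a refutation of the $\GRH$ inputs used in Theorem~\ref{t:main}. Structurally, the difficulty is that Schubert coefficients enjoy considerable geometric rigidity: the dimension condition $\inv(u)+\inv(v)=\inv(w)$ must hold on every non-vanishing triple, and the saturation-type phenomena that place the Grassmannian case in $\poly$ hint at further constraints that may obstruct arbitrary Boolean encodings. I would therefore pursue a two-track program: attempt concrete gadget constructions in the pipe dream and geometric models above, and treat each failure as evidence for a structural theorem that would, in the other direction, refute the conjecture in favor of the $\AM\cap\coAM$ upper bound given by Theorem~\ref{t:main}.
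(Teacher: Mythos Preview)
The statement you are trying to prove is a \emph{conjecture}, not a theorem, and the paper does not prove it. On the contrary, the entire thrust of the paper is to provide evidence \emph{against} this conjecture: Corollary~\ref{c:SV-not-hard} states that $\SV$ is \emph{not} $\coNP$-hard assuming the $\GRH$ and $\PH \neq \Pip_2$, which follows immediately from the Main Theorem~\ref{t:main} and the Boppana--H\aa stad--Zachos result. So there is no ``paper's own proof'' to compare against; the paper refutes the conjecture under standard assumptions rather than establishing it.

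Your proposal is not a proof but a research program, and you yourself identify the fatal obstruction in your third paragraph: any successful reduction would, combined with Theorem~\ref{t:main}, force $\coNP \subseteq \AM$ under the $\GRH$ and collapse $\PH$. This is not a mere technical nuisance to be worked around --- it is precisely why the conjecture is now believed to be false. The gadget constructions you sketch (pipe-dream encodings of Boolean gates, flag-intersection encodings of clauses) have no concrete content as written; no such gadgets are known, and the structural rigidity you mention (the dimension condition, saturation phenomena, the poly-time vanishing for Grassmannian permutations) is exactly the kind of obstruction one would expect for a problem sitting in $\AM \cap \coAM$. Your ``two-track program'' is honest about this, but the second track --- treating failures as evidence for the upper bound --- is what the paper actually accomplishes.
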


\smallskip

The following result resolved both the question and the conjecture under standard assumptions:

\smallskip

\begin{cor}  \label{c:SV-not-hard}
$\SV$ \ts is not \ts $\NP$-hard, assuming the \ts $\GRH$ \ts and \ts $\PH\ne \Sigmap_2$.
Similarly, \ts
$\SV$ \ts is not \ts $\coNP$-hard, assuming the \ts $\GRH$ \ts and \ts $\PH\ne \Pip_2$.
\end{cor}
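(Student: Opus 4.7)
My plan is to derive Corollary~\ref{c:SV-not-hard} by combining Theorem~\ref{t:main} with the classical Boppana--H{\aa}stad--Zachos theorem~\cite{BHZ87}, which states that $\coNP \subseteq \AM$ implies the collapse $\PH = \AM \subseteq \Pip_2$. Both halves of the corollary are contrapositives of short containment arguments, so the proof should be brief.

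For the $\NP$-hardness half, I would suppose that $\SV$ is $\NP$-hard. Since $\coAM$ is closed under polynomial-time many-one reductions, and since Theorem~\ref{t:main} places $\SV \in \coAM$ under the $\GRH$, this hardness would force $\NP \subseteq \coAM$, equivalently $\coNP \subseteq \AM$. Applying Boppana--H{\aa}stad--Zachos then yields $\PH \subseteq \Pip_2$; combined with $\Sigmap_2 \subseteq \PH$, this forces $\Sigmap_2 = \Pip_2 = \PH$. Taking the contrapositive gives the first statement of the corollary.

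The $\coNP$-hardness half is symmetric, and now uses the genuinely new half of Theorem~\ref{t:main}, namely $\SV \in \AM$ under the $\GRH$ (established in this paper via Purbhoo's algebraic criterion and the Determinant Lemma). If $\SV$ were $\coNP$-hard, then closure of $\AM$ under polynomial-time reductions would immediately give $\coNP \subseteq \AM$, and the same Boppana--H{\aa}stad--Zachos collapse would apply, delivering the second contrapositive.

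I expect no real obstacle here: this is a standard ``hard problem in an unexpectedly small class'' argument, and all the technical content has already been absorbed into Theorem~\ref{t:main}. The only minor subtlety worth flagging is that $\PH = \Sigmap_2$ and $\PH = \Pip_2$ are logically equivalent assertions (each is equivalent to $\Sigmap_2 = \Pip_2$), so the two non-hardness conclusions really follow from the same collapse of the polynomial hierarchy; the two formulations in the corollary are packaged simply to match each hardness notion to its naturally associated level.
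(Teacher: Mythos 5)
Your proposal is correct and follows essentially the same route as the paper, which likewise deduces the corollary immediately from Main Theorem~\ref{t:main} together with the Boppana--H{\aa}stad--Zachos collapse theorem \cite[Thm~2.3]{BHZ87}; the paper simply leaves the contrapositive bookkeeping (closure of $\AM$ and $\coAM$ under polynomial-time reductions, and the equivalence of the two collapse statements) implicit, which you have spelled out accurately.
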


\smallskip

\nin
The corollary follows immediately from the Main Theorem~\ref{t:main} and a result
of Boppana, H{\aa}stad and Zachos \cite[Thm~2.3]{BHZ87}.  The corollary implies
that the vanishing of Schubert coefficients is quite different from the vanishing
of \emph{Kronecker coefficients}, which is known to be $\coNP$-hard
even for partitions given in unary \cite{IMW17}.  See also \cite[$\S$5.2]{Pan23}
for further details and references.

\smallskip

\subsection{Notation} \label{ss:intro-notation}
We use \ts $\nn=\{0,1,2,\ldots\}
$ \ts and \ts $[n]=\{1,\ldots,n\}$.
We use \ts $e_1,\ldots,e_n$ \ts to denote the standard
basis in~$\cc^n$, and $\zero$ to denote zero vector.
We use bold symbols such as \ts $\bx$, $\by$, $\bal$, $\bbe$ \ts
to denote sets and vectors of variables, and bars such as \ts $\ov x$ \ts and \ts $\ov y$,
to denote complex vectors.  We also use \ts $\ov f$ \ts to denote a sequence
of polynomials \ts $(f_1,\ldots,f_m)$.

In computational complexity, we use only standard notation and
complexity classes.  We refer to \cite{AB09,Gol08,Pap94} for the definitions
and extensive background, and to \cite{Aar16} for the extensive introduction.

\medskip



\section{Lifted and compact formulations}\label{s:lifted}

The following section is completely independent of the rest of the paper
and discusses Hilbert's Nullstellensatz and the technology of lifted formulations.

\smallskip

\subsection{Hilbert's Nullstellensatz}\label{ss:lifted-HN}
Let $\rK=\cc[x_1,\dots,x_k]$ for some $s>0$.  We use $\bx = (x_1,\ldots,x_k)$.
Consider a system
\begin{equation}\label{eq:HN-system}
f_1(\bx) \. = \. \ldots \. =  \. f_m(\bx) \. = \. 0  \quad \text{where} \quad f_i\in \rK\ts,
\end{equation}
and denote by \ts $\cS\big(\ov f\big)\subseteq \cc^k$ \ts the set of solutions, where \ts $\ov f = (f_1,\ldots,f_m)$.

\emph{Hilbert's weak Nullstellensatz} \ts
is a fundamental result in algebra, which states that a polynomial system
has no solutions over $\cc$ \. \underline{if and only if} \.
there exist \ts $(g_1,\ldots,g_m)\in \rK^m$, such that
$$
\sum_{i=1}^m \. f_i \. g_i \, = \, 1\ts.
$$
Now let \ts $f_1,\ldots,f_m\in \zz[x_1,\dots,x_k]$.
The decision problem $\HN$ (\defn{Hilbert's Nullstellensatz}),
asks if the polynomial system \eqref{eq:HN-system}
has a solution over \ts $\mathbb{C}$.\footnote{By the Nullstellensatz,
this is equivalent to asking if there is a solution over $\overline{\qqq}$.
}
Here and everywhere below, the \emph{size} \ts %
of the polynomial system~\eqref{eq:HN-system} is defined as
$$
\phi\big(\ov f\big) \. := \. \sum_{i=1}^{m} \. \deg(f_i) \. + \. \sum_{i=1}^{m} \. s(f_i),
$$
where $s(g)$ denotes the sum of bit-lengths of coefficients in the polynomial~$g$.
Famously, Koiran showed that  \ts $\HN$ \ts is in the
polynomial hierarchy:

\smallskip

\begin{thm}[{\cite[Thm~2]{Koiran96}}{}]\label{t:main-HN}
    \. $\HN$ \ts is in \ts $\AM$ \ts assuming the \ts $\GRH$.
\end{thm}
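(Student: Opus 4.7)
The strategy is to reduce $\HN$ to solvability of the system modulo a suitably chosen prime $p$, and to use effective versions of the Chebotarev density theorem (which require the $\GRH$) to guarantee that many such primes exist. The $\AM$ protocol asks Merlin to supply a prime $p$ together with an explicit $\ff_p$-point of the variety, while Arthur contributes randomness to pin down the range where $p$ is drawn from.

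First, I would invoke the effective Nullstellensatz of Koll\'ar: if the system \eqref{eq:HN-system} has no solution over $\cc$, then there exist $g_1,\ldots,g_m \in \zz[\bx]$ and a nonzero integer $N$ with $\sum_i f_i\ts g_i = N$, where $\deg(g_i)$, $\log \|g_i\|_\infty$, and $\log|N|$ are bounded singly exponentially in $\phi(\ov f)$. Consequently, every prime divisor of $N$ has bit-length polynomial in $\phi(\ov f)$, and this gives soundness almost for free: for any prime $p \nmid N$, reducing the identity modulo $p$ yields $\sum \bar f_i\ts \bar g_i = \bar N \neq 0$ in $\ff_p[\bx]$, so the ideal generated by the $\bar f_i$ is the unit ideal and the system has no solution in $\ff_p^k$ (or even over $\overline{\ff}_p$).

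For completeness, assume $\cS(\ov f)\neq\emptyset$. By slicing with generic hyperplanes of controlled height to reduce to the zero-dimensional case and then applying primitive element, I would produce an algebraic solution $\ov x \in K^k$ where $K/\qqq$ is a number field of degree $D$ and discriminant $\Delta$ bounded singly exponentially in $\phi(\ov f)$. The effective Chebotarev theorem of Lagarias--Odlyzko, conditional on the $\GRH$, then guarantees that for a threshold $X$ of polynomial bit-length in $\log \Delta$ and $D$, a proportion at least $\frac{1}{2D}$ of primes $p\le X$ split completely in $K$; for each such $p$, reducing $\ov x$ modulo any prime of $\Oc_K$ above $p$ yields a vector in $\ff_p^k$ satisfying the system. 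The contribution of the ``bad'' primes dividing $N$ is only $O(\log|N|/X)$, dwarfed by the good density once $X$ is taken large enough. The $\AM$ protocol then runs as: Arthur picks a random integer in $[1,X]$, Merlin returns a nearby prime $p$ along with $\overline x \in (\zz/p\zz)^k$ with $f_i(\overline x) \equiv 0 \pmod p$, and Arthur verifies primality (in $\poly$) and each congruence.

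The hardest points will be (i) obtaining the singly-exponential bound on the degree $D$ and discriminant $\Delta$ of the field $K$ generated by a suitably chosen $\cc$-solution, which relies on nontrivial effective elimination and height bounds of Philippon/Nesterenko type; and (ii) calibrating the error term in the Lagarias--Odlyzko bound so that the $\AM$ acceptance gap exceeds a fixed constant. A secondary obstacle is the reduction from possibly positive-dimensional $\cS(\ov f)$ to the zero-dimensional setting where Chebotarev applies, which needs a Bertini-style slicing with explicit height control so as not to blow up $\phi$ by more than a polynomial factor. Both (i) and (ii) are exactly the places where the $\GRH$ is used in a way that seems unremovable with current techniques.
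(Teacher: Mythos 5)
First, note that the paper does not prove this statement at all: it is quoted as Koiran's theorem from \cite{Koiran96}, so the only meaningful comparison is with Koiran's own argument. Your number-theoretic skeleton is essentially his: an effective arithmetic Nullstellensatz certificate \ts $\sum_i f_i\ts g_i = N$ \ts shows that in the unsatisfiable case only primes dividing $N$ can admit solutions modulo~$p$, while in the satisfiable case a point defined over a number field $K$ of controlled degree and height, together with effective Chebotarev under the $\GRH$, yields many primes $p$ for which the system has a solution in $\ff_p^k$. The genuine gap is the protocol you extract from this. ``Arthur picks a random integer in $[1,X]$, Merlin returns a nearby prime $p$ with a solution mod~$p$'' does not work. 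If ``nearby'' leaves Merlin any freedom, soundness fails: $\log|N|$ is only \emph{singly exponential} in the input size (Koll\'ar-type height bounds give roughly $\rpoly(h)\ts d^{O(k)}$), not polynomial, so in the NO case there can be exponentially many bad primes, all of polynomial bit length, and Merlin simply presents one of them with its mod-$p$ solution. If instead the prime is forced by Arthur's sample (say, the least prime above it), completeness fails: the usable primes have relative density only about $1/D$ with $D\le d^{k}$ exponential in the input size, so no polynomially long window around a random integer will contain one. Moreover, your event ``$p$ splits completely in $K$'' has density $1/[\widetilde K:\qqq]$ for the Galois closure $\widetilde K$, which can be as small as $1/D!$ rather than $1/(2D)$; one must use the weaker event that $p$ has a degree-one prime of $K$ above it, whose Chebotarev density is at least $1/[K:\qqq]$, to keep $X$ singly exponential and $p$ of polynomial bit length.

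What closes the argument---and what Koiran actually does---is to compare \emph{counts}, not densities, and to certify a count inside $\AM$. Let $A_X$ be the set of primes $p\le X$ such that the system has a solution in $\ff_p^k$; membership in $A_X$ has an $\NP$ certificate (the prime and a solution vector; primality is checkable in $\poly$). In the NO case $|A_X|\le \log_2|N|$, a bound independent of~$X$; in the YES case, Lagarias--Odlyzko under the $\GRH$ gives $|A_X|\ \ge\ \pi(X)/(2D)\ -\ O\big(\sqrt{X}\ts\log(\Delta X^{D})\big)$ after discarding the exponentially few primes meeting denominators or the discriminant, so taking $X$ singly exponential with a larger polynomial in the exponent makes $|A_X|$ exceed the NO-case bound by any fixed desired factor. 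Distinguishing ``$|A_X|$ large'' from ``$|A_X|$ small'' when membership is $\NP$-verifiable is exactly the Goldwasser--Sipser set-size lower bound protocol: Arthur's randomness goes into a universal hash function, and Merlin answers with an element of $A_X$ hashing to a prescribed value, together with its certificate. This hashing step is the real $\AM$ content of the theorem and is what is missing from your proposal; granting it, your items (i) and (ii)---effective degree and height bounds for $K$, and calibration of the Chebotarev error term---are indeed the remaining technical work, and they are precisely where the $\GRH$ enters.
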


\smallskip

For the proof, Koiran's needs the existence of primes in certain
intervals and with modular conditions, thus the $\GRH$ assumption.
We refer to \cite{A+24} for detailed overview of the problem and
references to the earlier work.

\smallskip

\subsection{Lifted formulations}\label{ss:lifted-def}
Let $\rL=\cc[x_1,\dots,x_k,y_1,\ldots,y_\ell]$ for some $k,\ell>0$.  We say that a system
\begin{equation}\label{eq:lifted}
g_1(\bx,\by) \. = \. \ldots \. =  \. g_\ell(\bx,\by) \. = \. 0  \quad \text{where} \quad g_i\in \rL
\end{equation}
is a \defn{lifted formulation} of the system \eqref{eq:HN-system}, if
\begin{equation}\label{eq:lifted-cond}
\forall \ts \bx \in \cS\big(\ov f\big) \, \exists \ts \by \in \cc^\ell \ : \ (\bx,\by) \in \cS\big(\ov g\big).
\end{equation}
In other words, a natural projection \ts $\iota: \cc^{k+\ell}\to \cc^k$ \ts maps solutions of
\eqref{eq:HN-system} into solutions of \eqref{eq:lifted}: \ts
$\iota\big(\cS(\ov g)\big) = \cS\big(\ov f\big)$.  Clearly, $\HN$ for the system
\eqref{eq:HN-system} is equivalent to $\HN$ for  the system \eqref{eq:lifted}.

It may seem unintuitive that the size of a system of polynomials can become
smaller for a lifted formulation, but this is not uncommon in symmetric situations.
For example, polynomial \ts $x^{2^r}=z$ \ts has exponential size (in~$r$), while its
lifted formulation
$$
y_1 = x^2\,, \, y_2 = y_1^2\,, \, \ldots \,, \, y_r = y_{r-1}^2\,, \, y_r=z
$$
has linear size.

The idea of lifted formulations is completely standard in numerical
analysis and especially numerical algebraic geometry, see e.g.\ \cite{HS17,L+21}
for some recent work in the context of Schubert polynomials.  Our own motivation
comes from the literature on extended formulations of polyhedra, see below.

Lifted formulation \eqref{eq:lifted} is called \defn{compact} \ts if \.
$\phi(\ov g) \ts = \ts \rpoly(k)$.  A major obstacle in \cite{PR24o}, see Remark~A.2,
was that the determinant polynomial equation has exponential size.  The following
lemma shows that the determinant has a compact lifted formulation.

\smallskip

\begin{lemma}[{\rm Determinant lemma}{}]\label{l:det}
Let \ts $X=(x_{ij})$ \ts be an $n \times n$ matrix of variables.  Then
the {\em \defn{determinant equation}}
\begin{equation}\label{eq:det}\tag{Det}
\det X \. = \. z
\end{equation}
has a lifted formulation of size $O(n^3)$.
\end{lemma}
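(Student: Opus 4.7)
The plan is to invoke the Mahajan--Vinay result~\cite{MV97}, which expresses $\det X$ as the signed sum of weights of source-to-sink paths in a layered directed graph $G$ whose edges carry weights of the form $\pm x_{ij}$ and which has $|V(G)|,\. |E(G)| = O(n^3)$. Equivalently, $\det X$ is computed by an arithmetic branching program of polynomial size with linear edge weights, and the standard recipe for converting such a branching program into a polynomial system will then yield the desired lifted formulation.

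Concretely, for each vertex $v\in V(G)$ I will introduce an auxiliary variable $y_v$, intended to represent the sum of weights of all source-to-$v$ paths in~$G$. The lifted system will consist of (i) $y_s = 1$ for the source~$s$, (ii) the degree-$2$ recurrence \ts $y_v \. = \. \sum_{u \to v} w(u,v)\ts y_u$ \ts for each non-source vertex~$v$, where $w(u,v)\in\{\pm x_{ij}\}$, and (iii) $y_t = z$ for the sink~$t$.  Each such equation has bounded-bit coefficients and size proportional to the in-degree of~$v$, so the total size of the resulting system is $O(|E(G)|) = O(n^3)$, as required.

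It remains to check the projection condition~\eqref{eq:lifted-cond}, which is immediate from the Mahajan--Vinay identity: for any $\ov x\in \cc^{n^2}$, the auxiliary values $\ov y$ are uniquely determined layer-by-layer by (i)--(ii), and the resulting $y_t$ evaluates to $\det \ov x$. Hence $(\ov x,\ov y) \in \cS(\ov g)$ if and only if $\det \ov x = z$, which in particular gives~\eqref{eq:lifted-cond}. The only real obstacle is verifying the quantitative bound $|V(G)|,\.|E(G)| = O(n^3)$ for an ABP whose edge weights are individual variables $\pm x_{ij}$, and this is explicit in~\cite{MV97} via their \emph{clow sequence} construction; a naive substitute such as the Leibniz formula would yield $n!$ terms, which is precisely the exponential-size issue flagged in Remark~A.2 of~\cite{PR24o}.
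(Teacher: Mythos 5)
Your proposal is correct and follows essentially the same route as the paper: you take the Mahajan--Vinay layered DAG computing $\det X$, introduce one auxiliary variable $y_v$ per vertex, impose the layer-by-layer recurrences together with $y_s=1$, and equate the sink value with $z$; the paper does exactly this, except that it keeps the two sinks $t_+,t_-$ of \cite{MV97} with unsigned edge weights $x_{ij}$ and adds the single equation $z=y_{t_+}-y_{t_-}$, which is only a cosmetic difference from your single-sink, signed-weight version. The one quantitative slip is your claim that $|E(G)|=O(n^3)$: the Mahajan--Vinay graph has $O(n^3)$ vertices but $O(n^4)$ edges (as the paper itself records), so an accounting that charges per edge (equivalently, per monomial) would give $O(n^4)$ rather than $O(n^3)$. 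This does not affect the conclusion as the paper measures it, since the system consists of $O(n^3)$ equations, each of degree at most $2$ with coefficients $\pm 1$, which is how the paper arrives at the stated $O(n^3)$ bound; but you should either correct the edge count or make explicit that your size bound is driven by the number of equations (vertices), not by the number of edges.
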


\smallskip

This lemma is key for our proof of Theorem~\ref{t:main} as we use it
essentially as a black box.  We expect it to have further geometric
and representation theoretic applications in the future.

\smallskip

\subsection{Prior work: extended formulations} \label{ss:lifted-extended}
We modeled the notions above after extended formulations,
the celebrated area of combinatorial optimization.

For a convex polyhedron $P \ssu \rr^k$ defined by inequalities, an
\emph{extended formulation} is a polyhedron $Q \ssu \rr^\ell$
which projects onto~$P$.  When $Q$ is defined by $\rpoly(k)$ inequalities,
such extended formulation is called \emph{compact}.  In such cases,
one efficiently solve \emph{linear programming} problems on~$P$,
by solving them on~$Q$ and projecting solutions onto~$P$,
even if $P$ has exponentially many facets.

A prototypical example is the \emph{permutohedron} $P_n \ssu \rr^n$
defined as a convex hull of points $\big(\si(1),\ldots,\si(n)\big)$,
where $\si \in S_n\ts$.  This polytope is $(n-1)$-dimensional,
has $n!$ vertices and $(2^n-2)$ defining inequalities.  The permutohedron
has a compact extended formulation as a projection of the
\emph{Birkhoff polytope} \ts $B_n\ssu \rr^{n^2}$, see e.g.\ \cite{Pas14}.
This is a $(n-1)^2$-dimensional polytope of bistochastic matrices,
with $n!$ vertices given by $0/1$ matrices, and with $n^2$ defining inequalities.

The idea of using extended formulations was formalized by Yannakakis
in a remarkable paper \cite{Yan91}, where he showed that some combinatorial
polytopes have compact extended formulations.  More importantly,
Yannakakis shows that matching and {\sc TSP} polytopes, \emph{do not}
have compact extended formulations, under certain ``symmetry'' constraints.
One can view the latter result as an obstacle to solving {\sc TSP},
a benchmark $\NP$-hard problem, using a polynomial size linear program.

These results inspired a long series of papers.  Notably, the notion of
extended formulations was generalized to \emph{positive semidefinite lifts}
in \cite{GPT13}.  The symmetry constraints were eventually removed in major advances
\cite{F+15,Rot17}.  We refer to \cite{CCZ13,CCZ14,KWY11} for the introduction to the area,
background in combinatorial optimization and further references.

Our proof of the Determinant Lemma~\ref{l:det} is partially motivated by a
beautiful construction of an extended formulation of size $O(n\log n)$ given
by Goemans \cite{Goe15}, which is optimal and improves the $O(n^2)$ size of
the Birkhoff polytope construction.  His construction employs the structure
and efficiency of the \emph{AKS sorting} \cite{AKS83}, to simulate
the sorting with linear inequalities.  In effect, Goemans's construction is
completely oblivious to the exact details the AKS sorting, which in turn
uses explicit construction of expanders also as a black box.

\smallskip

\subsection{Prior work: complexity of the determinant} \label{ss:lifted-det-complexity}
Computing the permanent vs.\ determinant is a classical problem going back to
Valiant \cite{Val79b}, leading to $\VP =^? \VNP$ problem.  In turn, this problem
led to foundations of the \emph{Geometric Complexity Theory} (GCT), see e.g.\ \cite{Aar16,BI18}.
Of course, much of the effort is on lower bounds, a subject tangential to this paper.

There are several different ways one can restrict the computational model, e.g.\
\cite{LR17} gives an exponential lower bound for the permanent under symmetry
constraints.  Similarly, the
\emph{Algebraic Complexity Theory} (ACT) restricts algebraic computations to
straight line programs, and is the closest to our need.  We refer to \cite{BCS97} for
the careful treatment of ACT and the background, and to \cite{CKL24} for a recent
treatment of the {\sc Determinant} in the context of (more general) algebraic branching
problems.

We note that every straight-line
program can be realized as a lifted formulation, but the smallest size lifted
formulation can in principle be much smaller.  This is not unusual; see \cite{IL17}
by Ikenmeyer and Landsberg, which compares complexity of computing the
determinant and the permanent for different computational models.  We note
that commutativity of variables is crucial in this setting, since computing
determinant over noncommutative rings requires exponential time \cite{Nis91}.

In the standard model, the differences between permanent vs.\ determinant
become stark.  Famously, the permanent of integer matrices is $\SP$-complete \cite{Val79a},
while the determinant is $\GapL$-complete \cite{Toda91,Vin91}.  Another natural complete
problem in $\GapL$ is the \ts {\sc DirectedPathDifference}, which inputs an
acyclic digraph with a source~$s$, two sinks $t_+\ts,t_-\ts$, and outputs the difference
in the number of paths $s\to t_+$ and $s\to t_-\ts$.  In a beautiful paper \cite{MV97},
Mahajan and Vinay gave a parsimonious reduction of  {\sc DirectedPathDifference} from the
{\sc Determinant}, which we use in our proof of Lemma~\ref{l:det}.

\smallskip

\subsection{Proof of the Determinant Lemma~\ref{l:det}} \label{ss:lifted-det-proof}
Recall the construction in \cite{MV97}; see also \cite[$\S$3]{IL17} for a
concise presentation and examples.  The authors construct an explicit
directed acyclic graph $\Ga_n$ with the following properties:

\smallskip

$\circ$ \ts vertices of $\Ga_n$ have \emph{layers} \ts $\{0,\ldots,n\}$,
and directed edges connect layers $\ell$ to~$(\ell+1)$,

\smallskip

$\circ$ \ts $\Ga_n$ has $O(n^3)$ vertices and $O(n^4)$ edges,

\smallskip

$\circ$ \ts all edges in $\Ga_n$ have weights $x_{ij}\ts$,

\smallskip

$\circ$ \ts $\Ga_n$ has a unique source~$s$ at layer $0$ and two sinks $t_+\ts,t_-$ at layer~$n$,

\smallskip

$\circ$ \ts the sum of weighted paths $s\to t_+$ minus the sum of weighted paths $s\to t_-$ is $\det(X)$.

\smallskip

\nin
Here the weight of a path is a product of weights of its edges.  In \cite{MV97}, the
authors use
this construction to show that $\det(X)$ can be computed in polynomial time using a
straight line program (in contrast with the \emph{Gaussian algorithm} which requires
a circuit).  We use the same graph construction to construct a compact lifted
formulation as follows.

For a vertex $v$ in $\Ga_n$, denote by $y_v$ the corresponding variables.  Start
the lifted formulation with an equation \ts $y_s=1$.  For every vertex $v\ne s$, add
an equation
$$
y_v \, = \, \sum_{(w,v)\in \Ga_n} \. y_w \ts \cdot \ts \weight(w,v)\.,
$$
where the summation is over all directed edges $(w,v)$ in the graph~$\Ga_n\ts$.
Finally, add an equation
$$
z \, = \, y_{t_+} \. - \. y_{t_-}\..
$$
By construction, each $y_v$ counts the sum of weighted paths $s\to v$.  Thus,
we obtain a lifted formulation for \eqref{eq:det} of size $O(n^3)$.
\qed

\smallskip

{
\begin{rem}\label{r:lifted-obli}
Note that just like in the Goemans's construction in \cite{Goe15}, the specifics
of $\Ga_n$ are irrelevant, only the explicit nature and polynomial size are important.
As we mentioned above, any straight-line computation of a polynomial $f(\bx)$ can be
simulated by a lifted formulation, but not vice versa.  It would be interesting to
see if the $O(n^3)$ bound in the lemma can be improved.  Given that we have $(n^2+1)$
variables, can one find a better lower bound for the smallest lifted formulation
of \eqref{eq:det}?
\end{rem}
}


\medskip

\section{Proof of the main theorem} \label{s:proof}

\medskip
\subsection{Parametric Hilbert's Nullstellensatz}\label{ss:intro-HNP}
For the proof of the Main Theorem~\ref{t:main}, we need the following
strengthening of Theorem~\ref{t:main-HN} to finite algebraic extensions.
Let
$$
f_1,\ldots,f_m\.\in\. \mathbb{Z}(y_1,\ldots,y_k)[x_1,\dots,x_s]\ts.
$$
The decision problem $\HNP$ (\defn{Parametric Hilbert's Nullstellensatz})
asks if the polynomial system \eqref{eq:HN-system} has a solution over
\ts $\overline{\mathbb{C}(y_1,\ldots,y_k)}$.
In a remarkable recent work, Ait El Manssour, Balaji, Nosan, Shirmohammadi, and Worrell
extended Theorem~\ref{t:main-HN} to~$\HNP:$

\smallskip

\begin{thm}[{\cite[Thm~1]{A+24}}{}]\label{t:main-HNP}
    \. $\HNP$ \ts is in \ts $\AM$ \ts assuming the \ts $\GRH$.
\end{thm}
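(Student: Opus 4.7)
The plan is to extend Koiran's $\AM$ protocol for the ordinary Nullstellensatz (Theorem~\ref{t:main-HN}) to accommodate the extra parameters $\by=(y_1,\ldots,y_k)$. The essential new ingredient is a specialization argument that reduces solvability over $\overline{\cc(\by)}$ to solvability modulo a random prime~$p$ at a random $\ff_p$-point of parameter space.

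First, I would clear denominators. Writing $f_i=\wt f_i/D_i$ with $\wt f_i\in\zz[\by,\bx]$ and $D_i\in\zz[\by]$, and setting $D=\prod_i D_i$, the original system is solvable over $\overline{\cc(\by)}$ if and only if the incidence variety $V=\{\wt f_1=\cdots=\wt f_m=0\}\ssu\cc^{k+s}$ has \emph{dominant} projection to the open subset $U=\{D\ne 0\}\ssu\cc^k$; equivalently, the specialized system has a solution in $\cc^s$ for a Zariski-dense set of $\bar\by\in\cc^k$.

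Second, I would establish the following dichotomy, which is the heart of the argument. If the projection is dominant, then by generic fiber dimension combined with an effective Schwartz--Zippel bound, a uniform random $\bar\by\in\ff_p^k$ lies in the image of the incidence variety modulo~$p$ with probability at least $1-1/\rpoly$, provided $p$ has polynomial bit-length. If the projection is not dominant, then $1$ belongs to the ideal $(\wt f_1,\ldots,\wt f_m)$ in $\cc(\by)[\bx]$, and an effective arithmetic Nullstellensatz of Krick--Pardo--Sombra type yields cofactors $g_i\in\zz[\by,\bx]$, a leading polynomial $\Delta\in\zz[\by]$, and a nonzero integer $N$, all of bit-size polynomial in the input, satisfying the identity $N\.\Delta(\by)=\sum_i g_i(\by,\bx)\.\wt f_i(\by,\bx)$. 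This identity persists modulo any prime $p\nmid N$ and at any $\bar\by\in\ff_p^k$ with $\Delta(\bar\by)\not\equiv 0\pmod p$, certifying unsolvability of the specialization over $\overline{\ff_p}$ for all such $\bar\by$.

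Third, the dichotomy yields an $\AM$ protocol. Arthur samples a random prime $p$ in an explicit interval of polynomial bit-length (using the $\GRH$-based prime density estimate exactly as in Koiran's original argument) together with a random parameter $\bar\by\in\ff_p^k$. Merlin responds with a primality certificate for $p$, an extension degree $d=\rpoly$, and an alleged solution $\bar\bx\in\ff_{p^d}^s$. Arthur verifies in polynomial time that $p$ is prime, that $D(\bar\by)\not\equiv 0\pmod p$, and that $\wt f_i(\bar\by,\bar\bx)\equiv 0\pmod p$ for each~$i$. Completeness follows from the dominant branch of the dichotomy together with Koiran's $\GRH$-enabled estimate on the density of primes admitting an $\ff_{p^d}$-lift of a complex solution; soundness follows from the non-dominant branch, since no $(p,\bar\by)$ in the bulk of Arthur's sample space can admit a valid $\bar\bx$. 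The main obstacle is the quantitative control underlying the dichotomy: one needs sharp effective bounds on the degrees and heights both of the Nullstellensatz cofactors $g_i$ and of the leading polynomial $\Delta$, and these must combine with an effective Chebotarev / $\GRH$-based density estimate to ensure that a single random prime in an explicit polynomial range simultaneously avoids $\{N=0\}$ and renders the Schwartz--Zippel tail bound on $\{\Delta=0\}$ in $\ff_p^k$ negligible. Merging the arithmetic Nullstellensatz with Schwartz--Zippel over~$\ff_p$ in a way that respects the $\GRH$-based prime sampling is the technical heart of the proof in \cite{A+24}.
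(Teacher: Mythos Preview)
The paper does not prove Theorem~\ref{t:main-HNP}; it is quoted verbatim from \cite[Thm~1]{A+24} and used as a black box in the proof of the Main Lemma~\ref{l:SV-HNP}. There is therefore nothing to compare your proposal against in this paper.

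That said, your sketch is a reasonable outline of how the result in \cite{A+24} is obtained: clear denominators, set up the dominant/non-dominant dichotomy for the projection of the incidence variety, invoke an effective arithmetic Nullstellensatz in the non-dominant case, and run a Koiran-style random-prime protocol enriched by a random $\ff_p$-specialization of the parameters. The genuine work, as you correctly flag, is quantitative: one needs polynomial bounds on the heights and degrees of the Nullstellensatz cofactors over $\zz[\by][\bx]$ and on the exceptional locus $\Delta$, strong enough that a single prime of polynomial bit-length and a single Schwartz--Zippel sample suffice. Your proposal identifies this obstacle but does not resolve it; for the purposes of the present paper that is fine, since the authors explicitly defer to \cite{A+24} for the full argument.
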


\smallskip

We prove Theorem~\ref{t:main} as an application of Theorem~\ref{t:main-HNP}.
More precisely, recall that in \cite{PR24o} the authors proved that
\. $\neg\SV$ \. reduces to \. $\HNP$.  This and Theorem~\ref{t:main-HNP}
immediately imply that \ts $\SV\in \coAM$ \ts assuming the \ts $\GRH$.
In this paper we prove the following
counterpart:

\smallskip

\begin{lemma}[{\rm Main lemma}{}] \label{l:SV-HNP}
\. $\SV$ \. reduces to \. $\HNP$.
\end{lemma}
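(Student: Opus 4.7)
The plan is to exploit the dual nature of Purbhoo's algebraic criterion from \cite{Purbhoo06}. In contrast with the Billey--Vakil and Hein--Sottile systems, whose \emph{number of solutions} counts $c^w_{u,v}$ and which underlie the earlier reduction $\neg\SV \to \HNP$ in \cite{PR24o}, Purbhoo's system is an algebraic certificate for \emph{vanishing}: for generic choices of certain auxiliary flag parameters, $c^w_{u,v} = 0$ if and only if an explicit polynomial system has a solution over $\cc$. This ``solvability $\Lra$ vanishing'' direction is exactly what is needed to reduce $\SV$ (rather than $\neg\SV$) to $\HNP$.

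First, I would formalize the ``generic parameters'' condition. Purbhoo's system naturally contains variables parametrizing auxiliary flags that must be chosen from a Zariski open subset of an affine space. The standard way to encode ``holds for generic values'' in the Nullstellensatz language is to promote these parameters to transcendental indeterminates $y_1, \ldots, y_k$ and ask whether the polynomial system has a solution over $\overline{\cc(y_1, \ldots, y_k)}$, which is precisely the setup of $\HNP$. The equivalence between ``solvable for generic specializations'' and ``solvable over the function field'' is standard, obtained by applying Chevalley's theorem to the projection of the incidence variety onto the parameter space.

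The main technical obstacle is encoding Purbhoo's system in polynomial size. His criterion is phrased in terms of incidence and rank conditions between subspaces, which translate directly to vanishing of $k \times k$ minors of matrices of affine coordinates. Written naively, a determinantal equation with $k$ comparable to $n$ has exponential size. This is where the Determinant Lemma \ref{l:det} is essential: each determinantal equation $\det X = z$ is replaced by its compact lifted formulation of size $O(n^3)$ at the cost of $O(n^3)$ new auxiliary variables. Since a lifted formulation has the property that its solution set projects onto the original solution set, substituting it in preserves existence of solutions and therefore preserves the answer of $\HNP$.

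Assembling these ingredients yields the desired polynomial-time reduction: given the $\SV$ input $(a_{ij})$, $(b_{ij})$, $(c_{ij})$, build Purbhoo's algebraic system, rewrite every determinantal equation using Lemma~\ref{l:det}, and declare the flag-parameter variables as the $y_j$'s of the $\HNP$ instance. By Purbhoo's criterion, the resulting instance is a yes-instance of $\HNP$ precisely when $c^w_{u,v} = 0$. The remaining routine verifications are that the total system size is $\rpoly(n)$ and that all integer coefficients have polynomially bounded bit-length; both follow from the explicit nature of Purbhoo's construction together with the $O(n^3)$ bound from the Determinant Lemma.
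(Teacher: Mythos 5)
Your proposal follows essentially the same route as the paper: invoke Purbhoo's vanishing criterion, promote the generic unipotent/flag parameters to indeterminates so that ``generic solvability'' becomes solvability over $\overline{\cc(y_1,\ldots,y_k)}$ (the $\HNP$ setting), and use the Determinant Lemma~\ref{l:det} to replace the determinantal condition by a compact lifted formulation of polynomial size. The only difference is one of detail, not substance: where you speak of general rank conditions and $k\times k$ minors, the paper uses the dimension condition $\inv(u)+\inv(v)=\inv(w)$ to reduce everything to a single full-size determinant $\det(M)=0$ together with the inverse constraints $\rho\widetilde{\rho}=\omega\widetilde{\omega}=\tau\widetilde{\tau}={\sf Id}_n$.
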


The lemma, combined with Theorem~\ref{t:main-HNP}, immediately implies Main Theorem~\ref{t:main}.

\smallskip

\subsection{Purbhoo's criterion}
\label{ss:proof-purbhoo}
Let \ts ${\sf G}={\sf GL}_n(\cc)$ \ts  be the \emph{general linear group}.
This is a matrix group lying in an ambient vector space $V \simeq \cc^{n^2}$.
 Let ${\sf B}$ \ts denote the \emph{Borel subgroup}, i.e.\ the group
 of upper triangular matrices.
Let ${\sf N}$ \ts denote the subgroup of \emph{unipotent matrices},
i.e.\ the group of upper triangular matrices with $1$'s on the diagonal. We have:
$$
{\sf N}\subset {\sf B}\subset{\sf G} \subset V.
$$
Let ${\mathfrak n}$ denote the Lie algebra of ${\sf N}$, i.e.\
the set of strictly upper triangular matrices (with $0$'s on the diagonal).
We think of ${\mathfrak n}$ as a subspace of~$V$.

For a permutation \ts $w\in S_n\ts$, define \ts $R_w:={\mathfrak n}\cap (w\cdot {\sf B}_{-})$,
where \ts ${\sf B}_{-}={\sf B}^T$ \ts is the subgroup of lower triangular matrices.
One can think of \ts $R_w=(r_{ij})$ \ts as strictly upper triangular
matrices with \ts $r_{ij}=0$ \ts for all $i<j$ \ts and \ts $w(i) < w(j)$.
For example, we have $R_e=\{\zero\}$ for an identity
permutation \ts $e=(1,2,...,n)$, and \ts $R_{w_\circ} = {\mathfrak n}$ \ts for
the long permutation \ts $w_\circ=(n,n-1,\ldots, 1)$.
\smallskip

\begin{lemma}[{\rm \defn{Purbhoo's criterion} \cite[Corollary~2.6]{Purbhoo06}}{}]\label{lem:Pur}
For generic \ts $\rho,\omega,\tau\in{\sf N}$, we have:
    \[c_{u,v}^w\. \neq \. 0 \quad \Longleftrightarrow \quad
    \rho R_u\rho^{-1}+\omega R_v\omega^{-1}+\tau R_{w_\circ w}\tau^{-1} \, = \, {\mathfrak n}.
    \]
Here the sum is the usual sum of vector subspaces of~$V$.
\end{lemma}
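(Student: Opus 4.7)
The plan is to prove Purbhoo's criterion by interpreting $c_{u,v}^w$ geometrically as a triple intersection number in the flag variety ${\sf G}/{\sf B}$, then translating non-emptiness of this intersection into the linear-algebraic statement about the sum of conjugated subspaces $R_u, R_v, R_{w_\circ w} \subseteq {\mathfrak n}$. The core idea is that each $R_w$ realizes the Zariski tangent space to the Schubert variety $X_w$ at a chosen basepoint of ${\sf G}/{\sf B}$, and conjugation by a generic element of ${\sf N}$ implements ``generic translation'' of $X_w$ inside ${\sf G}/{\sf B}$. The key dimension count $\dim R_u + \dim R_v + \dim R_{w_\circ w} = \binom{n}{2}$ coincides with the combinatorial condition $\inv(u) + \inv(v) = \inv(w)$, which is both necessary for $c_{u,v}^w \neq 0$ and for the three subspaces to span ${\mathfrak n}$ generically.

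First, I would invoke Kleiman's transversality theorem to express $c_{u,v}^w$ as the count (with multiplicity) of points in the triple intersection $g_1 X_u \cap g_2 X_v \cap g_3 X_{w_\circ w}$ for generic $g_1, g_2, g_3 \in {\sf G}$, vanishing unless the dimension condition holds. Second, I would localize at the basepoint $w_\circ {\sf B} \in {\sf G}/{\sf B}$, identify the opposite big cell there with ${\mathfrak n}$ via the exponential map, and verify that $R_w$ is precisely the tangent subspace to $X_w$ at $w_\circ {\sf B}$ on its smooth locus; the combinatorial input is that $R_w$ is spanned by the root vectors $E_{ij}$ indexed by the inversions of $w$, matching the local Bruhat description of $X_w$ in the affine chart. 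Third, I would observe that the ${\sf N}$-conjugation action on ${\mathfrak n}$ sends the tangent space to $\rho X_u$ at the basepoint to $\rho R_u \rho^{-1}$ (and similarly for the other two factors), so the three subspaces span ${\mathfrak n}$ exactly when the three translated Schubert varieties meet transversely in a single reduced point at the basepoint.

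Finally, I would upgrade from generic ${\sf G}$-translates to generic ${\sf N}$-translates by using that the orbit ${\sf N} \cdot w_\circ {\sf B}$ is Zariski-dense in ${\sf G}/{\sf B}$, so ${\sf N}$-genericity already suffices to realize the transverse configurations guaranteed by Kleiman near $w_\circ {\sf B}$. The main obstacle is the tangent-space identification step: since Schubert varieties are singular in general, one must either restrict to the smooth locus (which generic ${\sf N}$-conjugation always hits) or argue directly through the root-space description $R_w = {\mathfrak n} \cap (w \cdot {\sf B}_{-})$ and the Bruhat decomposition of ${\sf G}$. A secondary technical point concerns degenerate cases where the dimension condition fails: these must be handled separately, either via the grading of Schubert calculus (which forces $c_{u,v}^w = 0$) or by using the rigid coordinate structure of the $R_w$ to rule out spurious spanning beyond what the dimension counts predict.
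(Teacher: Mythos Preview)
The paper does not prove this lemma: it is quoted verbatim as \cite[Corollary~2.6]{Purbhoo06} and used as a black box in the proof of the Main Lemma~\ref{l:SV-HNP}. There is therefore no ``paper's own proof'' to compare your proposal against; the authors simply import Purbhoo's result.

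That said, your outline is broadly in the spirit of how Purbhoo (and independently Belkale~\cite{Belkale06}) establish this criterion: reduce the nonvanishing of $c_{u,v}^w$ to a transversality statement via Kleiman, then translate transversality at a basepoint into a spanning condition on subspaces of~$\mathfrak n$. A few cautions if you intend to flesh this out. First, the identification of $R_w$ is not literally ``the Zariski tangent space to $X_w$ at $w_\circ{\sf B}$''; rather, one works in the affine big cell and identifies the relevant Schubert cell with an affine subspace whose direction is $R_w$ (this is the point of the definition $R_w=\mathfrak n\cap w\cdot{\sf B}_-$). Schubert varieties being singular makes the naive tangent-space language treacherous. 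Second, your step ``upgrade from generic ${\sf G}$-translates to generic ${\sf N}$-translates'' hides real content: one must argue that the locus of bad $(\rho,\omega,\tau)$ is Zariski-closed in ${\sf N}^3$ and proper, which in Purbhoo's paper follows from a dominance argument for an explicit morphism rather than from density of an orbit alone. Third, the degenerate case $\inv(u)+\inv(v)\neq\inv(w)$ is handled in the paper (and in Purbhoo) by the cohomological grading, not by any linear-algebra argument on the $R_w$; your suggestion to ``rule out spurious spanning'' is unnecessary once the dimension count shows the sum cannot equal $\mathfrak n$.
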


\smallskip

\subsection{Proof of Main Lemma~\ref{l:SV-HNP}}

\smallskip

Considering the converse of Lemma~\ref{lem:Pur}, we consider the equation
\begin{equation}\label{eq:purRHS}
   c_{u,v}^w\. = \. 0 \quad \Longleftrightarrow \quad
    \rho R_u\rho^{-1}+\omega R_v\omega^{-1}+\tau R_{w_\circ w}\tau^{-1} \, \subsetneq \, {\mathfrak n}.
\end{equation}

By the dimension condition we assume $\inv(u)+\inv(v)=\inv(w)$.
Then define
\begin{align*}
    S_u&:=\{x_{ij}{\sf e}_{ij} \, : \, i<j, u(i)>u(j)\},\\
    S_v&:=\{y_{ij}{\sf e}_{ij} \, : \, i<j, v(i)>v(j)\}, \text{ and}\\
    S_{w_\circ w}&:=\{z_{ij}{\sf e}_{ij} \, : \, i<j, (w_\circ w)(i)>(w_\circ w)(j)\}
\end{align*}
Here ${\sf e}_{ij}$ is the $n\times n$ matrix with a $1$ in position $(i,j)$ and $0$'s elsewhere. Let $\bx,\by,\bz$ denote sets of those variables $x_{ij},y_{ij},z_{ij}$ appearing therein. Then these sets form bases of $R_u$, $R_v$, and $R_{w_\circ w}$, respectively. Note that for $\pi\in S_n$,  $\dim(R_{\pi})=\inv(\pi)$. Thus since $\inv(u)+\inv(v)=\inv(w)$, $\dim(R_{u})+\dim(R_{v})+\dim(R_{w_{\circ}w})=\binom{n}{2}$.

To construct generic $\rho\in{\sf N}$, let $\rho=(\rho_{ij})$, where
\[\rho_{ij}=
\begin{cases}
    \alpha_{ij} & \text{ if } \ i<j,\\
    1 & \text{ if }\  i=j,\\
    0 & \text{ otherwise. }
\end{cases}
\]
Here these $\al_{ij}$ are formal parameters.
Similarly build generic $\omega,\tau\in{\sf N}$ in terms of parameters $\beta_{ij}, \gamma_{ij}$, respectively. Then define the sets of these parameters \ts $\bal=\{\al_{ij}\}$, \ts $\bbe=\{\be_{ij}\}$, and \ts $\bga=\{\ga_{ij}\}$, where the indices range over \ts $1\leq i<j\leq n$.

Again, we form $\widetilde{\rho}=(\widetilde{\rho}_{ij})$, where
\[\widetilde{\rho}_{ij}=
\begin{cases}
    a_{ij} & \text{ if } \ i<j,\\
    1 & \text{ if } \ i=j,\\
    0 & \text{ otherwise. }
\end{cases}
\]
Note that here we treat $\al_{ij}$ as variables. Similarly build generic $\widetilde{\omega},\widetilde{\tau}$ in terms of variables $b_{ij}, c_{ij}$, respectively.
Then define the sets of variables \ts ${\textbf{\textit{a}}}=\{a_{ij}\}$, \ts ${\textbf{\textit{b}}}=\{b_{ij}\}$, and  \ts ${\textbf{\textit{c}}}=\{c_{ij}\}$, where the indices range over \ts $1\leq i<j\leq n$.

Then we obtain bases for $\rho R_u\widetilde{\rho}$, $\omega R_v\widetilde{\omega}$, and $\tau R_{w_\circ w}\widetilde{\tau}$, respectively:
\begin{align*}
    T_u&:=\rho  S_u  \widetilde{\rho},\\
    T_v&:=\omega  S_v  \widetilde{\omega}, \text{ and}\\
    T_{w_\circ w}&:=\tau  S_{w_\circ w}  \widetilde{\tau}.
\end{align*}
Note $\dim(R_{u})+\dim(R_{v})+\dim(R_{w_{\circ}w})=\#T_u+ \#T_v+ \# T_{w_\circ w}=\binom{n}{2}=\dim({\mathfrak n})$.

By ignoring all entries weakly below the main diagonal in each matrix, we view each element in $T:=T_u\cup T_v\cup T_{w_\circ w}$ as a ${\binom{n}{2}}$-vector.
Let $M$ be the $\binom{n}{2}\times \binom{n}{2}$ matrix formed by the vectors in $T$.
Then the right-hand side of Equation~\eqref{eq:purRHS} holds if and only if $\det(M)=0$, when we set $\widetilde{\rho}=\rho^{-1}$, $\widetilde{\omega}=\omega^{-1}$, and $ \widetilde{\tau}=\tau^{-1}$.

Let $\mathcal {S}(u,v,w_\circ w)$ be the system formed by the constraints:
$$
\left\{ \ \aligned
& \rho\cdot \widetilde{\rho}\. = \. {\sf Id}_n\., \\
& \omega\cdot \widetilde{\omega} \. = \. {\sf Id}_n\., \\
& \tau\cdot \widetilde{\tau} \. = \. {\sf Id}_n\., \\
&\det(M) \. = \.  0\ts,
\endaligned
\right.
$$
where the last constraint is replaced by its lifted formulation using the Determinant Lemma~\ref{l:det}.
Here $\mathcal {S}(u,v,w_\circ w)$ uses variables ${\textbf{\textit{a}}}\cup {\textbf{\textit{b}}}\cup {\textbf{\textit{c}}}\cup \bx\cup\by\cup\bz$ and parameters $\bal\cup \bbe\cup \bga$.

Note that matrix entries in $M$ have size $O(n^2)$, as they are monomials of degree at most $2$ whose nonzero coefficients are in $\bal\cup\bbe\cup\bga$.
By the Determinant Lemma~\ref{l:det}, the last equation and thus the whole system \ts
$\mathcal {S}(u,v,w_\circ w)$ \ts has size $O(n^{12})$.

Now, the right-hand side of Equation~\eqref{eq:purRHS} holds if and only if $\mathcal {S}(u,v,w_\circ w)$ is satisfiable over $\mathbb{C}(\bal,\bbe,\bga)$.
Since $\bal\cup \bbe\cup \bga$ are algebraically independent, $\mathcal {S}(u,v,w_\circ w)$ has a solution over $\mathbb{C}(\bal,\bbe,\bga)$ if and only if $\mathcal {S}(u,v,w_\circ w)$  has a solution over $\mathbb{C}$ for
a generic choice of evaluations $\ov \al$, $\ov \be$, $\ov \ga$ of $\bal$, $\bbe$, $\bga$. Thus by Lemma~\ref{lem:Pur}, the result follows. \qed

\medskip

\subsection{Further applications}
As noted in $\S$\ref{ss:intro-special} (see also~$\S$\ref{App:Schub-geom}), the Schubert structure constants $c_{u,v}^w$ are also the structure constants arising from multiplying Schubert classes in the cohomology ring of the complete flag variety \cite{LS82}. More generally, we may consider structure constants arising from multiplying Schubert classes in $H^*({\sf G/\sf B})$, for a complex reductive Lie group ${\sf G}\in \{{\sf GL}_n,{\sf SO}_{2n+1},{\sf Sp}_{2n},{\sf SO}_{2n}\}$. Here ${\sf B}\subset {\sf G}$ the Borel subgroup, the subgroup of upper triangular matrices in ${\sf G}$.

These generalized flag varieties ${\sf G/\sf B}$ may be referred to the type $A, B, C, D$ \ts flag varieties, respectively. Let $c_{u,v}^w(Y)$ denote the corresponding type $Y$  structure constants,
where $Y\in\{A, B, C, D\}$.\footnote{For non-classical types \ts $E_6$, \ts $E_7$,
\ts $E_8$, \ts $F_4$ \ts and \ts $G_2\ts$,  there is only a finite number of
Schubert coefficients, so the problem is uninteresting from the computational complexity point of view.}
Here $u,v,w$ are elements in the Weyl group $\mathcal{W}$ in type $Y$.
See \cite[$\S$4.2]{PR24o} for a brief overview, or \cite{AF} for a detailed exposition.
So far, we had focused on the type $A$ structure constants, i.e. the ${\sf GL}_n$ case.

Continuing this generalization, we may employ the type $Y$ Schubert polynomials $\Sc_w^Y$ of Billey--Haiman \cite{BH95} to compute these structure coefficients:
\[
\Sc_u^Y \ts \cdot \ts \Sc_v^Y \, = \, \sum_{w\in\mathcal{W}} \. c_{u,v}^w(Y) \. \Sc_w^Y\..
\]
Thus we consider the \defn{type $Y$ Schubert vanishing problem}:
$$
\SV(Y) \, := \, \big\{c^w_{u,v}(Y) =^? 0 \big\}.
$$

The main result of this paper extends to all classical type:

\smallskip

\begin{thm}[{\rm Schubert vanishing for all types}{}]\label{t:mainClassical}
$\SV(Y) \. \in \. \AM \cap \coAM$ \. assuming the \ts $\GRH$, for all \ts $Y\in\{A,B,C,D\}$.
\end{thm}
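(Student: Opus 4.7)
\smallskip

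The plan is to run the proof of Main Theorem~\ref{t:main} uniformly over the classical types by replacing every type-$A$ input with its type-$Y$ analogue. Three ingredients carry over with essentially no change: (a) Purbhoo's criterion, which he states for an arbitrary complex reductive group ${\sf G}$ with Borel ${\sf B}$, unipotent radical ${\sf N}$, Lie algebra ${\mathfrak n} = {\rm Lie}({\sf N})$ and Weyl group $\mathcal W$; (b) the Determinant Lemma~\ref{l:det}, which is purely a statement about $n \times n$ matrices of commuting variables and is therefore type-independent; and (c) Theorem~\ref{t:main-HNP} on $\HNP \in \AM$ under $\GRH$. For the $\coAM$ half, one would analogously extend the \cite{PR24o} reduction of $\neg \SV$ to $\HNP$, which used the Hein--Sottile lifted square formulation; that construction is already written for arbitrary $G/B$ in the numerical Schubert calculus literature.

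The first step is to fix, for each $Y\in\{B,C,D\}$, a faithful matrix realization ${\sf G}\subset {\sf GL}_N(\cc)$ with $N=O(n)$ (namely ${\sf SO}_{2n+1}, {\sf Sp}_{2n}, {\sf SO}_{2n}$), and parameterize the unipotent group ${\sf N}\subset {\sf G}$ by $O(n^2)$ affine coordinates (the exponents of positive root vectors). The Lie algebra ${\mathfrak n}$ inherits a basis $\{{\sf e}_\al\}_{\al \in \Phi^+}$ indexed by the $|\Phi^+| = O(n^2)$ positive roots, and for each $w \in \mathcal W$ the subspace $R_w := {\mathfrak n}\cap (w\cdot {\sf B}_-)$ is spanned by the root vectors ${\sf e}_\al$ with $w^{-1}\al \in \Phi^-$; in particular $\dim R_w = \ell(w)$, and a basis is computable in polynomial time from any standard encoding of~$w$ (signed or even-signed permutations).

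The second step mirrors $\S\ref{ss:proof-purbhoo}$ verbatim. Introduce formal parameters $\bal,\bbe,\bga$ of total size $O(n^2)$ for generic $\rho,\omega,\tau\in{\sf N}$, together with variables $\textbf{\textit{a}},\textbf{\textit{b}},\textbf{\textit{c}}$ for their inverses $\widetilde\rho,\widetilde\omega,\widetilde\tau$, constrained by $\rho\widetilde\rho=\omega\widetilde\omega=\tau\widetilde\tau={\sf Id}$. Introduce variables $\bx,\by,\bz$ indexing the bases $S_u, S_v, S_{w_\circ w}$ of $R_u, R_v, R_{w_\circ w}$. By the dimension condition $\ell(u)+\ell(v)=\ell(w)$, the conjugates $\rho S_u\widetilde\rho$, $\omega S_v\widetilde\omega$, $\tau S_{w_\circ w}\widetilde\tau$ collectively give $|\Phi^+|$ vectors in the $|\Phi^+|$-dimensional space ${\mathfrak n}$ (coordinatized in the root basis). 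Assemble them as columns of a square matrix $M$ of size $O(n^2)\times O(n^2)$ whose entries are polynomials of degree at most $2$ in $\bal\cup \bbe\cup \bga$ with coefficients linear in $\bx\cup \by\cup \bz$, hence of bit-length $O(n^2)$. The Purbhoo criterion then reads $c_{u,v}^w(Y)=0 \iff \det(M)=0$ over $\cc(\bal,\bbe,\bga)$, and applying Lemma~\ref{l:det} to the $\det(M)=0$ constraint produces a system $\mathcal{S}(u,v,w_\circ w)$ of polynomial size. Satisfiability of $\mathcal{S}$ over $\cc(\bal,\bbe,\bga)$ is an $\HNP$ instance, and by Theorem~\ref{t:main-HNP} this places $\SV(Y)\in \AM$ under $\GRH$.

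The only genuine obstacle is verifying that Purbhoo's generic subspace-sum criterion holds in the stated form for types $B,C,D$ (the proof in \cite{Purbhoo06} is written uniformly but should be cross-checked for the non-simply-laced cases and for $D_n$, where $w_\circ$ acts nontrivially on simple roots). Once that is in hand, all remaining work is bookkeeping: encoding positive roots of classical types, explicit polynomial-time conversion from a Weyl group element to the list $\{\al\in\Phi^+ : w^{-1}\al\in\Phi^-\}$, and checking that the conjugation map $\rho\cdot{\sf e}_\al\cdot\widetilde\rho$ is expressible by polynomials of controlled size in the root-vector coordinates (which holds because ${\sf N}$ is unipotent of nilpotency class $O(n)$). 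No new algebraic ideas beyond those of $\S\ref{s:proof}$ and $\S\ref{s:lifted}$ are needed.
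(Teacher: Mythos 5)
Your $\AM$ half is essentially the paper's argument: the paper likewise observes that Purbhoo's criterion \cite[Corollary~2.6]{Purbhoo06} is stated for general complex reductive groups (so the item you flag as "the only genuine obstacle" is not an obstacle), and it runs the reduction of Main Lemma~\ref{l:SV-HNP} essentially verbatim in type $Y$, with the Determinant Lemma~\ref{l:det} keeping the system of polynomial size before invoking Theorem~\ref{t:main-HNP}. The only cosmetic difference is how membership $\rho,\omega,\tau\in{\sf G}$ is guaranteed: you parameterize ${\sf N}$ directly by root-subgroup coordinates, while the paper keeps the matrix entries as parameters and imposes additional polynomial equations cutting out~${\sf G}$; either works.

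The $\coAM$ half is where your proposal has a real gap. You assert that the \cite{PR24o} reduction of $\neg\SV$ to $\HNP$ via the Hein--Sottile lifted square formulation \cite{HS17} carries over to arbitrary $G/B$ "with essentially no change." In fact \cite[Theorem~2.4]{PR24o} already gives $\SV(Y)\in\coAM$ for $Y\in\{A,B,C\}$, and the substance of the $\coAM$ side of the present theorem is precisely that type $D$ did \emph{not} carry over: as recorded in \cite[Remark~A.2]{PR24o}, the type $D$ system contains the equation $\det(\omega)=1$ (cutting out ${\sf SO}_{2n}$), whose expansion has exponential size, so the reduction to $\HNP$ breaks there without further input. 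The paper closes this exact gap with the Determinant Lemma~\ref{l:det}, replacing $\det(\omega)=1$ by its compact lifted formulation (an alternative fix appears in a revised appendix of \cite{PR24o} with Speyer). You have the needed tool in hand, but your write-up never identifies this obstruction or applies the lemma to it, so as written the $\coAM$ claim for type $D$ is unjustified.
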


\begin{proof}
    By \cite[Theorem~2.4]{PR24o}, \ts  $\SV(Y)\in \coAM$ assuming the \ts $\GRH$ for $Y\in\{A,B,C\}$. Further, as noted in \cite[Remark~A.2]{PR24o}, the analogous result in type $D$ was prevented by a lingering determinantal equation $\det(\omega)=1$. The Determinant Lemma~\ref{l:det} resolves this issue, proving that \ts $\SV(D)\in \coAM$.\footnote{In the Appendix~$C$ of a revised version of \cite{PR24o}, written jointly with David Speyer, the authors circumvent this issue in a different way and also prove that \ts $\SV(D)\in \coAM$.}

    Further, the generality of \cite[Corollary~2.6]{Purbhoo06} gives a vanishing criterion in types $A, B, C, D$. For brevity, we suppress the details of the translation to types $B, C, D$ and instead review the general framework.

    Take the unipotent subgroup ${\sf N}\subset {\sf B}\subset {\sf G}$ of type $Y$. Let ${\mathfrak n}$ denote the Lie algebra of ${\sf N}$. Then for $w\in \mathcal{W}$, define $R_w:={\mathfrak n}\cap (w{\sf B}_{-}w^{-1})$, where ${\sf B}_{-}$ are the lower triangular matrices in ${\sf G}$. Then for generic elements $\rho,\omega,\tau\in {\sf N}$:
    \[ c_{u,v}^w\. = \. 0 \quad \Longleftrightarrow \quad
    \rho R_u\rho^{-1}+\omega R_v\omega^{-1}+\tau R_{w_\circ w}\tau^{-1} \, \subsetneq \, {\mathfrak n}.
    \]
    The argument used for Main Lemma~\ref{l:SV-HNP} works verbatim to translate the right-hand side into a system of polynomial equations. The only adjustment is that we may impose additional equations to ensure $\rho,\omega,\tau\in {\sf G}$, as specified in the relevant sections of \cite{PR24o}. Using the Determinant Lemma~\ref{l:det}, the resulting systems have polynomial size. This shows
    $\SV(Y)$ reduces to \HNP \  for $Y\in\{A,B,C,D\}$. Thus the result follows using Theorem~\ref{t:main-HNP}.
\end{proof}

\vskip.7cm

\subsection*{Acknowledgements}
We are grateful to Kevin Purbhoo for sharing his insights which partially
motivated this paper.  We thank Sara Billey, Nickolas Hein, Minyoung Jeon,
Allen Knutson, Leonardo Mihalcea, Greta Panova, Oliver Pechenik,
Maurice Rojas, Mahsa Shirmohammadi, Alex Smith, Frank Sottile, David Speyer,
Avi Wigderson, James Worrell, Weihong Xu, Alex Yong and Paul Zinn-Justin
for interesting discussions and helpful comments.
The first author was partially supported by the NSF grant CCF-2302173.
The second author was partially supported by the NSF MSPRF grant DMS-2302279.

\newpage
{\footnotesize

}

\newpage
\appendix

\section{Schubert polynomials}\label{App:Schub}
Although we do not use Schubert polynomials to state or prove the main result,
the concept they represent is fundamental to fully understand the meaning
of Main Theorem~\ref{t:main}.   We thus include several different
definitions for reader's convenience.  We refer to \cite{Man01,Knu16}
for more on these definitions and further background.


\subsection{Divided differences} \label{App:Schub-div}
The following is the original definition due to Lascoux and Sch\"utzen-berger \cite{LS82}.
For a permutation \ts $w_\circ = (n,n-1,\ldots,2,1)$, let
$$
\Sch_{w_\circ} \. : = \. x_1^{n-1} x_2^{n-2} \. \cdots  \. x_{n-1}\..
$$
A permutation $w\in S_n$ is said to have a \emph{descent} at~$i$, if \ts $w(i)> w(i+1)$.
Denote by \ts $\Des(w)$ \ts the \emph{set of descents} \ts of~$w$, and by \ts $\des(\si):=|\Des(\si)|$
\ts the \emph{number of descents}. Define the \emph{divided difference operator}
$$
\pt_i F \, := \, \frac{F - s_i F }{x_i -x_{i+1}}\,,
$$
where the transposition $s_i:=(i,i+1)$ acts on $F\in \cc[x_1,\ldots,x_n]$ by transposing the variables.
For all $i \in \Des(w)$, let
$$
\Sch_{ws_i} \. := \. \pt_i \ts \Sch_w\.,
$$
and define all Schubert polynomials recursively.  It follows
that \ts $\Sch_w\in \zz[\bx]$ \ts are homogeneous polynomials of degree \ts $\inv(w)$.
Here \. $\inv(w):=\{(i,j) \, : \,i<j,\. w(i)>w(j)\}$ \. is the number of inversions in~$w$.

\smallskip

To show that Schubert polynomials are well defined, one needs to
check that
$$\pt_i\ts\pt_{j} \. = \. \pt_{j}\ts\pt_i \quad \text{for all \. $|i-j|\ge 2$, \ and}  \quad
\pt_i\ts\pt_{i+1}\ts \pt_i \. =  \. \pt_{i+1} \ts\pt_i\ts\pt_{i+1}\.,
$$
which follow from a straightforward computation.

This definition is elementary, easy to use, and can be generalized in various
directions.  The disadvantage of this definition is a nonobvious combinatorial
nature of the coefficients.  One can only conclude that \ts $[\bx^\al] \ts \Sch_w \in \zz$,
but not that \ts $[\bx^\al] \ts \Sch_w \in \nn$.

\smallskip

\subsection{Working forward, not backward} \label{App:Schub-for}
Using a standard embedding $S_n$ into $S_{n+1}$ by adding a fixed point $(n+1)$,
one can define a limit object $S_\infty$ to be the set of bijections $\si: \nn\to \nn$,
where $\si(i)=i$ for all but finitely many~$i$.  We can now define Schubert polynomials
forward, starting with $\Sch(1):=1$.  Use the
following rules:
$$
\Sch_{ws_i} \. := \. \pt_i \ts \Sch_w \quad \text{if \, $i \in \Des(w)$} \quad
\text{and} \quad \Sch_{ws_i} \. := \. 0 \quad \text{if \, $i \notin \Des(w)$}.
$$
In this setting, it is easy to see that Schubert polynomials are uniquely defined.
The existence becomes a substantive result, but other things become more apparent,
e.g.\ the symmetry properties of the construction.

Note that when \ts $\pt_i\ts F=0$, the polynomial $F$ is symmetric in variables $x_i$
and~$x_{i+1}$. Thus one can think of Schubert polynomials as partially symmetric.
In particular, it is easy to see that for the \emph{Grassmannian
permutations}, defined as permutations $w$ with $\des(w)=1$, we have $\Sch_w$ are
symmetric polynomials.  For example, when \ts
$u_{n,k}=(1,\ldots,n-k,n,n-k+1,\ldots,n-1)$, we have:
$$
\Sch_{u_{n,k}}(x_1,\ldots,x_n) \, =\, e_k(x_1,\ldots,x_n)  \, = \,
\sum_{1\le i_1< \ldots <i_k\le n} \. x_{i_1}  \cdots \. x_{i_k}
$$
is the \emph{elementary symmetric polynomial}.

\smallskip

\subsection{Pipe dreams}\label{App:Schub-pipe}
For a permutation \ts $w\in S_n$\ts, denote by $\RC(w)$  the set
of \ts \emph{RC-graphs} (also called \emph{pipe dreams}), defined as tilings
of a staircase shape with \emph{crosses} and \emph{elbows} as in the figure below,
such that:

{\small $(i)$} \, curves start in row $k$ on the left and end in column $w(k)$ on top, for all \ts $1\le k \le n$, and

{\small $(ii)$} \. no two curves intersect twice.

\nin
It follows from these conditions that every \ts $H \in \RC(w)$ \ts
has exactly \ts $\inv(w)$ \ts crosses.

\begin{figure}[hbt]
\begin{center}
	\includegraphics[height=2.5cm]{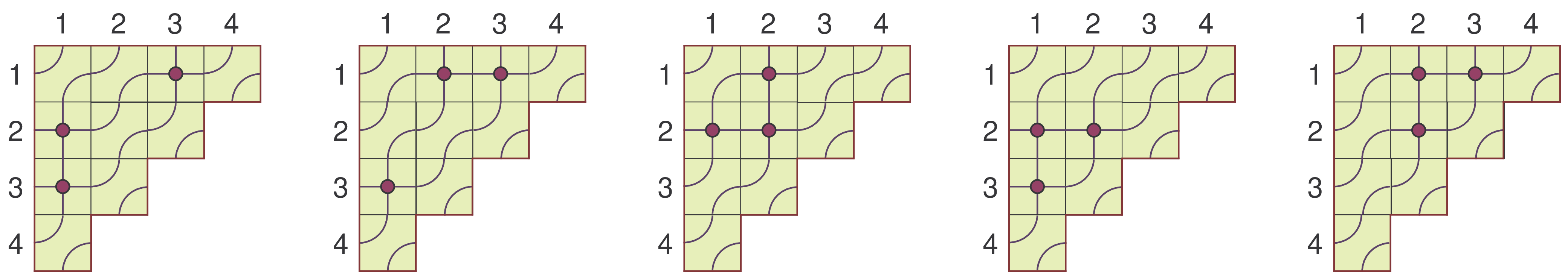}
\hskip-4.1cm
\caption{Graphs in \ts $\RC(1432)$ \ts and the corresponding Schubert
polynomial \.
 $\Sch_{1432} = x_1x_2x_3+x_1^2x_3+ x_1x_2^2+x_2^2x_3+x_1^2x_2$ \.
with monomials in this order.}
\label{f:RC}
\end{center}
\end{figure}


%
The {Schubert polynomial} \ts $\Sch_w\in \nn[x_1,x_2,\ldots]$ \ts
is defined as
\begin{equation}\label{eq:Schubert-def}
\Sch_w(\bx) \, := \, \sum_{H\ts\in\ts \RC(w)} \. \bx^H \quad \text{where} \quad
\bx^H \, := \, \prod_{(i,j) \. : \. H(i,j) \ts = \ts \boxplus} \. x_i \..
\end{equation}
In other words, \ts $\bx^H$ \ts is the product of \ts $x_i$'s
over all crosses \ts $(i,j)\in H$, see Figure~\ref{f:RC}.
As mentioned above, note that Schubert polynomials stabilize when fixed
points are added at the end, e.g.\ \ts $\Sch_{1432} = \Sch_{14325}$.

In this setting the combinatorial nature of Schubert polynomials is more transparent.
Notably, one can show that for Grassmannian permutations $w\in S_\infty$, Schubert
polynomial $\Sch_w$ coincides with the Schur function corresponding to
the partition given by the \emph{Rothe diagram}
$$\rR(w) \, := \, \big\{\big(w(j),i\big) \. : \. i<j, \. w(i)>w(j)\big\} \. \ssu \ts \nn^2.
$$
It follows from this definition that the coefficients
\ts $[\bx^\al] \ts \Sch_w$ \ts
are nonnegative integers, and moreover that they are in $\SP$ as a counting function.

\smallskip

\subsection{Geometric definition} \label{App:Schub-geom}

Let \ts ${\sf G}={\sf GL}_n(\cc)$ \ts  be the {general linear group}.
Take \ts ${\sf B}\subset {\sf G}$
to be the {Borel subgroup} of upper triangular matrices in ${\sf G}$. Similarly define \ts ${\sf B}_{-}\subset {\sf G}$ \ts the {opposite Borel subgroup} of lower triangular matrices in ${\sf G}$.

The \emph{complete flag variety} \ts is defined as \ts  $\Fl_n := {\sf G}/{\sf B}$.
Under the left action of ${\sf B}_{-}\ts$, the variety $\Fl_n$   has finitely many orbits \ts $X_w^{\circ}\ts$,
indexed by permutations $w\in S_n$.
These are called \emph{Schubert cells}.

 The \emph{Schubert varieties} $X_w$ are the Zariski closures of the Schubert cells~$X_w^{\circ}\ts$.
The \emph{Schubert classes} $\{\sigma_w\}_{w\in \mathcal{W}}$ are the
Poincar\'e duals of Schubert varieties.  These form a ${\mathbb Z}$--linear basis
of the cohomology ring \ts $H^{*}(\Fl_n)$.
\emph{Borel's ring isomorphism} \cite{Bor53} maps
\begin{align*}
    \Phi \. : \, H^{*}(\Fl_n) \. \longrightarrow \. \faktor{ \zz[x_1,x_2,\ldots,x_n]}{\langle e_i(x_1,x_2,\ldots,x_n) \, : \, i\in[n]\rangle}\,,
\end{align*}
where $e_i$ are elementary symmetric polynomials.
Schubert polynomials are polynomial representatives of Schubert classes:  $\Phi(\sigma_w)= \Sc_w\ts$.

In this setting, the {Schubert coefficients} $c_{u,v}^w$ are defined as structure constants:
 $$
    \sigma_u \smallsmile \sigma_v \, = \, \sum_{w\in {S_n}} \. c_{u,v}^{w} \. \sigma_{w}\..
$$

By the \emph{Kleiman transversality} \cite{Kleiman}, coefficients \ts $c_{u,v}^w$ \ts
count the number of points in the intersection of generically translated Schubert varieties:
\begin{equation}\label{eq:SchubStructureInt}
    c_{u,v}^w \,  = \,\#\big\{X_u(F_{\bullet}) \cap X_v(G_{\bullet}) \cap X_{w_\circ w}(E_{\bullet})\big\},
\end{equation}
where \ts $F_{\bullet}\ts$, \. $G_{\bullet}$ \ts and \ts $E_{\bullet}$ are generic flags.
This definition  implies the $S_3$-symmetries of Schubert coefficients:
$$c_{u,v}^{w_\circ w}  \,  = \, c_{v,u}^{w_\circ w} \, = \, c_{u,w}^{w_\circ v }
 \,  = \, c_{w,u}^{w_\circ v} \, = \, c_{v,w}^{w_\circ u }  \,  = \, c_{w,v}^{w_\circ u}
\,.
$$

\bigskip


\section{Quotes and historical remarks}\label{s:finrem}

\subsection{Formulation of the problem}\label{ss:finrem-hist}
The problem of finding a combinatorial interpretation of Schubert
coefficients goes back to Lascoux and Sch\"utzenberger in 1980s,
and was restated by numerous authors.  As the area evolved, so
did the language and the formulation of the problem.
For example, in his celebrated survey, Stanley states the problem
as follows:

\smallskip

\begin{center}\begin{minipage}{13.8cm}%
{\emph{``Find a combinatorial interpretation of the
`Schubert intersection coefficients' \ts $c^w_{u,v}\ts$,
thereby combinatorially reproving that they are
nonnegative.''}~\cite[Problem~11]{Sta00}}
\end{minipage}\end{center}

\smallskip

\nin
In the introduction to his monograph, Manivel singles out the problem as the main mystery
in the area:

\medskip

\begin{center}\begin{minipage}{13.8cm}%
{\emph{``We note that Schubert polynomials are far from having revealed all of their
secrets. We know almost nothing, for example, about their multiplication, and
about a rule of Littlewood--Richardson type which must govern them.''}~\cite[p.~3]{Man01}}
\end{minipage}\end{center}

\smallskip

\nin
Lenart motivates the problem by the geometry, and as an effort to avoid the geometry
altogether (cf.~\cite{Ass23}).
He also singles out the vanishing problem as a motivation:

\medskip

\begin{center}\begin{minipage}{13.8cm}%
{\emph{``A famous open problem in algebraic combinatorics, known as the Schubert problem
$[...]$ is to find a combinatorial description of the Schubert structure
constants (and, in particular, a proof of their nonnegativity which bypasses geometry). The importance
of this problem stems from the geometric significance of the Schubert structure constants, and from
the fact that a combinatorial interpretation for these coefficients would facilitate a deeper study of
their properties (such as their symmetries, vanishing, etc.). The Schubert problem proved to be a
very hard problem, resisting many attempts to be solved.''}~\cite{Len10}}
\end{minipage}\end{center}

\smallskip

\nin
Despite many remarkable developments, these sentiments continue to hold as
underscored by Knutson, who used a starkly different language:

\smallskip

\begin{center}\begin{minipage}{13.8cm}%
{\emph{``We cannot emphasize strongly enough that the name of the game is to give
{\bf {manifestly nonnegative formul{\ae}}} for the
$[$Schubert coefficients$]$.''}~\cite[$\S$1.4]{Knu22}}\footnote{Original emphasis.}
\end{minipage}\end{center}

\smallskip

\nin
Knutson then emphasizes the vanishing problem as the first motivation:\footnote{Two
other motivations are computational efficiency and ``possibility for
categorification''. } 

\medskip

\begin{center}\begin{minipage}{13.8cm}%
{\emph{``For applications (including real-world engineering applications) it is more
important to know that some structure constant $c$ is positive, than it is to know
its actual value. This is much more easily studied with a noncancelative
formula.''}~(ibid.)}
\end{minipage}\end{center}

\smallskip
\subsection{Substance of the problem}\label{ss:finrem-nonSP}
There is a great deal of uncertainty in algebraic combinatorics as to what exactly
constitutes a ``combinatorial interpretation''.  This is best illustrated by
the following formulation of the [main problem] in the most recent monograph:

\smallskip

\begin{center}\begin{minipage}{13.8cm}%
{\emph{``Find an interpretation for the Schubert structure constants in terms
of counting some sort of combinatorial objects such as paths in Bruhat order, Mondrian
tableaux, labeled diagrams, permutation arrays, or $n$-dimensional chess
games.''}~\cite[Open Problem~3.112]{BGP25}}
\end{minipage}\end{center}

\smallskip

\nin
These combinatorial objects are all in $\SP$, making the problem harder and more narrow
than it already is (or simpler, since without any complexity assumptions \emph{any number}
is the number of \emph{some} paths in Bruhat order).
We maintain that $\SP$ as the only known robust notion of a ``combinatorial interpretation'',
and refer the reader to \cite{IP22,Pak-OPAC} for the explanation behind this reasoning.
We only mention in passing that combinatorial objects in the problem above come from
well-known attempts to resolve the problem.  Curiously, the authors hedge
themselves:

\medskip

\begin{center}\begin{minipage}{13.8cm}%
{\emph{``Note, the Schubert structure constants already count the number of points
in a certain type of generic $0$-dimensional intersection $[...]$
Perhaps one could call this a combinatorial interpretation, since they do count
something! However, it is very difficult to test if flags are truly in generic position, even
though presumably almost anything you could choose would suffice.''}~\cite[Remark~3.113]{BGP25}}
\end{minipage}\end{center}

\medskip

\nin
This remark goes straight to the core of the issue and underscores the need for
the formal approach.  Fundamentally, this paper is an attempt to understand
the computational hardness of counting these intersections. 

\smallskip
\subsection{Complexity of the problem}\label{ss:finrem-complexity}
Prior to \cite{PR24o,PR24b}, the effort to analyze the hardness of
the Schubert vanishing problem was largely unsuccessful:

\smallskip

\begin{center}\begin{minipage}{13.8cm}%
{\emph{``It is well known that solving Schubert problems are `hard'.
To our knowledge, no complete analysis of the algorithmic complexity is
known.''}~\cite[p.~41]{BV08}}
\end{minipage}\end{center}

\smallskip

\nin
In this quote, Billey and Vakil are fully cognizant that counting
$0$-dimensional intersections can be the basis of the algorithm, as
they describe in the paper.  After employing a mixture of theoretical
analysis and experimental evidence, they conclude:

\smallskip

\begin{center}\begin{minipage}{13.8cm}%
{\emph{``Of course this allows one in theory to solve all Schubert problems,
but the number and complexity of the equations conditions grows quickly to
make this prohibitive for large~$n$.''}~\cite[p.~24]{BV08}}
\end{minipage}\end{center}

\smallskip

\nin
Nothing in this paper suggests that computing Schubert coefficients
can be made efficient; we are nowhere close to practical applications.
Note, however, our lifted formulation approach is different from that by
Billey--Vakil's experimental effort:

\smallskip

\begin{center}\begin{minipage}{13.8cm}%
{\emph{``It is well known in that solving more equations with fewer variables
is not necessarily an improvement. More experiments are required to characterize
the `best' method of computing Schubert problems. We are limited
in experimenting with this solution technique to what a symbolic
programming language like Maple can do in a reasonable period of time.
The examples in the next section will illustrate how this technique is useful
in keeping both the number of variables and the complexity of the rank
equations to a minimum.'}~\cite[p.~43]{BV08}}
\end{minipage}\end{center}

\smallskip

\nin
In contrast, we are happy to increase the number of variables to ensure
that resulting polynomials have a poly-size support, at which point
Theorem~\ref{t:main-HNP} can be applied.


\end{document}